\theoremstyle{plain}
\newtheorem{theorem}{Theorem}
\newtheorem{corollary}{Corollary}
\newtheorem{lemma}{Lemma}
\theoremstyle{definition}
\theoremstyle{remark}
\newtheorem{remark}{Remark}
\DeclarePairedDelimiter\norms{\Vert}{\Vert}
\DeclarePairedDelimiter\floor{\lfloor}{\rfloor}
\DeclareMathOperator{\as}{a.s.}
\renewcommand{\v}{\text{Var}}
\newcommand{\half}{\mbox{$\frac12$}}
\renewcommand{\Re}{\operatorname{Re}}
\DeclareMathOperator{\p  }{\normalfont{\textbf{P}}}
\DeclareMathOperator{\e  }{\normalfont{\textbf{E}}}
\DeclareMathOperator{\var}{\normalfont{\text{Var}}}
\newcommand{\bb}{B^{\circ}}
\newcommand{\C}{C_{n,k}}
\newcommand{\dto}{\Rightarrow}
\newcommand{\Xl}{X_n}
\newcommand{\Xbb}{\widetilde{X}_{n}}
\newcommand{\Xh}{\widehat{X}_n}
\newcommand{\scale}{w}
\newcommand{\driftspace}{b}
\newcommand{\driftX}{\mu_X} 
\newcommand{\driftXi}{b} 
\newcommand{\driftZeta}{\beta} 
\newcommand{\varianceXi}{a} 
\newcommand{\varianceZeta}{\alpha} 
\newcommand{\stddevZeta}{\alpha^{1/2}} 
\newcommand{\driftGauss}{\mu}
\newcommand{\stddevGauss}{\sigma}
\newcommand{\varianceGauss}{\sigma^2}
\newcommand{\lamperti}{\psi}
\newcommand{\dxi}{\Delta\xi}
\newcommand{\dtaylor}{Z_{n,k}}
\newcommand{\momentdxi}{\lambda}
\newcommand{\momentdtaylor}{\widetilde{\lambda}}
\newcommand{\maxTimeSpan}{\eta_2}
\newcommand{\minTimeSpan}{\eta_1}
\newcommand{\pbb}{\pi}
\newcommand{\FourierMoment}{\widehat{p}}
\newcommand{\Cf}{C}
\newcommand{\Df}{D}
\title{On Markov chain approximations for computing boundary crossing probabilities of diffusion processes\footnote{Research supported by the University of Melbourne Faculty of Science Research Grant Support Scheme.}}
\date{}
\author{Vincent Liang\footnote{School of Mathematics and Statistics, The University of Melbourne, Parkville 3010, Australia;  e-mail: vliang@student.unimelb.edu.au.}  {} and Konstantin Borovkov\footnote{School of Mathematics and Statistics, The University of Melbourne, Parkville 3010, Australia; e-mail: borovkov@unimelb.edu.au.}}
\begin{document}

\maketitle

\begin{abstract}
We propose a discrete time discrete space Markov chain approximation with a Brownian bridge correction for computing curvilinear boundary crossing probabilities of a general diffusion process on a finite time interval. For broad classes of curvilinear boundaries and diffusion processes, we prove the convergence of the constructed approximations in the form of products of the respective substochastic matrices to the boundary crossing probabilities for the process as the time grid used to construct the Markov chains is getting finer. Numerical results indicate that the convergence rate for the proposed approximation with the Brownian bridge correction is $O(n^{-2})$ in the case of $C^2$-boundaries and a uniform time grid with $n$ steps. 

\textit{Keywords:} Boundary crossing probability; diffusion processes; Markov chains.

2020 Mathematical Subject Classification: Primary 60J60, Secondary 60J70; 65C30

\end{abstract}

\section{Introduction}

We consider the problem of approximating the probability for a general diffusion process to stay between two curvilinear boundaries. Mathematically, the problem is solved: the non-crossing probability can be expressed as a solution to the respective boundary value problem for the backward Kolmogorov partial differential equation (this result goes back to the 1930s, see~\cite{Kolmogorov1931}, \cite{Kolmogorov1933}, \cite{Khintchine1933}). However, simple explicit analytic expressions are confined to the case of the Wiener process using the method of images~\cite{lerche}, and most of the results for diffusion processes rely on verifying Cherkasov's conditions (see~\cite{Cherkasov1957}, \cite{Ricciardi1976}, \cite{Ricciardi1983}) and then transforming the problem to that for the Wiener process by using a monotone transformation. Outside this class of special cases, one should mostly rely on computing numerical approximations to the desired probabilities. 

One popular approach to finding expressions for the first-passage-time density is through the use of Volterra integral equations. Much work was done on the method of integral equations for approximating the first-passage time density for general diffusion processes (\cite{Ricciardi1983}, \cite{Ricciardi1984}, \cite{Buonocore1987}, \cite{Giorno1989}, \cite{Buonocore1990}, \cite{Jaimez1991}, \cite{Sacerdote1996}), culminating with \cite{Gutierrez1997}, which expressed the first-passage-time density of a general diffusion process in terms of the solution to a Volterra integral equation of the second kind. Volterra integral equations of the first kind for the first-passage time for the Brownian motion were derived in \cite{park76} and \cite{load}. Although the method of integral equations is quite efficient for computational purposes, a drawback of the method is that the kernel of the integral equation is expressed in terms of the transition probabilities of the diffusion process. Therefore, for this method to work, we have to first compute these transition probabilities for all times on the time grid used. The method proposed in this paper only requires knowledge of the drift and diffusion coefficients, allowing it to be easily used in the general case. For further details on the connection between Volterra integral equations and the first-passage-time density, we refer the reader to \cite{peskir2002}. Other computational techniques include Monte Carlo (\cite{Ichiba2011}, \cite{Gobet2000}) and numerical solving of partial differential equations (see e.g. \cite{Patie2008} and references therein).

The classical probabilistic approach to the boundary-crossing problem is the method of weak approximation, which involves proving that a sequence of simpler processes $X_n$ weakly converges to the desired diffusion process in a suitable functional space, which entails the convergence of the corresponding non-crossing probabilities. Along with the already mentioned \cite{Kolmogorov1931}, \cite{Kolmogorov1933}, \cite{Khintchine1933}, one can say that this approach was effectively used in \cite{erdos46} in the case of ``flat boundaries'' (see also Ch.\,2, \S\,11 in \cite{bill}). More recently, the authors in \cite{fuwu} approximated the Wiener process with a sequence of discrete Markov chains with absorbing states and expressed the non-crossing probability as a product of transition matrices. The authors in \cite{Ji2015} extended the results in \cite{fuwu} by approximating a general diffusion with a sequence of Markov chains and similarly expressed the non-crossing probability as a product of transition matrices. However, the convergence rates of these approximations are proved to be $O(n^{-1/2})$, which leaves much to be desired in practical applications.

Another standard approach is to approximate the true boundary with one for which the crossing probability is easier to compute. In the one-sided boundary case, the authors of \cite{wang97}, \cite{potz01} used piecewise-linear approximations and the well-known formula for a one-sided linear boundary crossing probability for the Brownian bridge process to express the non-crossing probability in terms of a multiple Gaussian integral. This was generalised to the case of two-sided boundaries in \cite{nov99}. In the case of the Brownian motion, a sharp explicit error bound for the approximation of the corresponding boundary crossing probabilities was obtained in \cite{borov} and extended to general diffusion processes in \cite{Downes2008}.

The present paper combines piecewise-linear boundary approximations, limit theorems on convergence of Markov chains to diffusions, and a modification of the matrix multiplication scheme from \cite{fuwu} to create an efficient and tractable numerical method for computing the boundary crossing probabilities for time-inhomogeneous diffusion processes in both one- and two-sided boundary cases. The approach in the paper consists of the following steps: 
    \begin{enumerate}[(i)]
        \item transform the original general diffusion process into one with unit diffusion coefficient, applying the same transformation to the boundaries;
        \item approximate the transformed diffusion process with a discrete time Gaussian Markov process using a weak Taylor expansion;
        \item approximate the discrete time process from step (ii) with a discrete Markov chain in discrete time, whose transition probabilities are given by the normalised values of the transition density of the process from step (ii). The state spaces of the discrete Markov chain are constructed in such a way that the Markov chain does not overshoot or undershoot the boundaries when hitting them;
        \item construct a continuous time process that interpolates the Markov chain from step~(iii) with a collection of Brownian bridges; and finally
        \item approximate the transformed boundaries with piecewise linear ones and compute the piecewise linear boundary crossing probability of the interpolated process constructed in step (iv) using matrix multiplication.
    \end{enumerate}

The paper is structured as follows. In Section 2 we describe in detail the above steps. Section 3 states the main results of the paper. In Section 4 we present the proofs of these results. Section 5 contains numerical examples.

\section{Markov chain approximation of a diffusion}

Step (i). We are interested in the boundary crossing probability of a one dimensional diffusion process $Y$, whose dynamics are governed by the following stochastic differential equation:
    \begin{equation}\label{sde}
        \begin{cases}
            dY(t) = \mu_Y(t,Y(t))\,dt+ \sigma_Y(t,Y(t))\,dW(t), & t\in (0,1],\\
            Y(0) = y_0,
        \end{cases}
    \end{equation}
where $\{W_t\}_{t\geq 0}$ is a standard Brownian motion process defined on a stochastic basis $(\Omega,\mathcal{F},(\mathcal{F}_{t})_{t\geq 0},\p)$ and $y_0$ is constant. We assume that $\mu_Y:[0,1] \times \mathbb{R} \to \mathbb{R}$ and $\sigma_Y : [0,1] \times \mathbb{R} \to (0,\infty) $ satisfy the following conditions sufficient  for the uniqueness and existence  of a strong solution to \eqref{sde} (see pp.\,297--298 in \cite{ethier09}):
    
\begin{enumerate}
    \item [(C1)] The functions $\mu_Y$ and $\sigma_Y$ are continuous and such that, for some $K < \infty$,
        \begin{equation*}
            x \mu_Y(t,x) \leq K(1 +x^2),\quad \sigma_Y^2(t,x) \leq K(1 + x^2)
        \end{equation*}
    for all $(t,x)\in [0,1]\times \mathbb{R}$ and, for any bounded open set $U \subset \mathbb{R}$ there exists a $K_U < \infty$ such that
        \begin{equation*}
            \lvert \mu_Y(t,x) - \mu_Y(t,y)\rvert + \lvert \sigma_Y^2(t,x) - \sigma_Y^2(t,y)\rvert \leq K_U\lvert x- y\rvert
        \end{equation*}
    for all $t\in [0,1]$, $x,y\in U$.
\end{enumerate}    
For the unit-diffusion transformation $\lamperti_t(y):= \int_0^y\sigma_Y(t,u)^{-1}\,du$ to be well-defined, we will also assume that
\begin{enumerate}
    \item [(C2)] the function $\sigma_Y$ is continuously differentiable with respect to $(t,x)$ with bounded partial derivatives and 
        \begin{equation*}
            \inf_{(t,x) \in [0,1]\times  \mathbb{R}}\sigma_Y(t,x)>0.
        \end{equation*}
\end{enumerate}

By It\^o's lemma, the transformed process $X =\{X(t) := \lamperti_t(Y(t))\}_{t\in[0,1]}$ is a diffusion process with a unit diffusion coefficient (see, e.g.,~Ch.\,4, \S\,4.7 of \cite{Kloeden1992} or \cite{Rogers1985}), 
    \begin{equation}\label{defn:sde_X}
        \begin{cases}
            dX(t) = \driftX(t,X(t))\,dt + dW(t),&t\in (0,1],\\
            X(0) = x_0 := \lamperti_0(y_0),
        \end{cases}
    \end{equation}
where $\driftX(t,x) = (\partial_t \lamperti_t + \frac{\mu_Y}{\sigma_Y}  - \half \partial_x\sigma_Y)\circ \lamperti_t^{-1}(x)$ and $ \lamperti_t^{-1}(z)$ is the inverse of $z= \lamperti_t(y)$ in $y$. 

Denote by $\Cf=C([0,1])$ the space of continuous functions $x : [0,1]\to\mathbb{R}$ equipped with the uniform norm $\norms{x}_{\infty} := \sup_{t\in[0,1]}\lvert x(t)\rvert$. For a fixed $x_0 \in \mathbb{R}$, consider the class 
    \begin{equation*}
        \mathcal{G}:= \{ (f^-,f^+): f^{\pm}\in \Cf,\,f^-(0) < x_0 < f^+(0),\, \inf_{0\leq t \leq 1}(f^+(t)-f^-(t)) > 0 \}
    \end{equation*}
of pairs of functions from $\Cf$ and introduce the notation
    \begin{equation*}
        S(f^-,f^+):= \{x \in \Cf:f^-(t) < x(t) <f^+(t),\,t\in [0,1] \},\quad (f^-,f^+) \in \mathcal{G}.
    \end{equation*}
The problem we deal with in this paper is how to compute the probability of the form $\p(Y \in S(g_0^-,g_0^+))$ for $(g_0^-,g_0^+) \in \mathcal{G}$. Clearly, the desired probability coincides with 
    \begin{equation*}
        \p(X \in G),\quad G:= S(g^-,g^+),
    \end{equation*}
where $g^{\pm}(t) := \lamperti_t(g_0^{\pm}(t))$, $t\in[0,1]$. It is also clear that $(g^{-},g^{+}) \in \mathcal{G}$ due to condition (C2). Henceforth, we work exclusively with the process $X$ and the boundaries $g^{\pm}$.

Step (ii). In the context of curvilinear boundary crossing probabilities, the authors in \cite{fuwu} approximated the Wiener process by discrete Markov chains with transition probabilities computed by first taking the values of the Wiener process transition densities on a lattice and then normalising these to obtain a probability distribution on that lattice (there is also a small adjustment of that distribution to perfectly match the first two moments of the original transition probabilities and the ``discretised'' ones). In the general case, due to the absence of closed-form expressions for the transition density of $X$, we use transition probabilities of the weak Taylor approximations of $X$ to construct the approximating discrete process.

More precisely, for any $n \geq 1$, let $0 = t_{n,0} < t_{n,1} <\cdots < t_{n,n} = 1$ be a partition of $[0,1]$, and set $\Delta t_{n,k}:= t_{n,k} - t_{n,k-1}$, $k=1,\ldots,n$. For simplicity, we assume that there exist constants $\minTimeSpan>0$ and $\maxTimeSpan \geq 1$ such that 
    \begin{equation}\label{eqn:d2}
        \frac{\minTimeSpan}{n}\leq \Delta t_{n,k}\leq \frac{\maxTimeSpan}{n},\quad k=1,2,\ldots,n,
    \end{equation}
for all $n \geq 1$. We will use the second order expansion, so we further require that the following condition is met:
    \begin{enumerate}
    \item [(C3)] For any fixed $x \in \mathbb{R}$, one has $\driftX(\,\cdot\,,x) \in C^1([0,1])$, and for any fixed $t \in [0,1]$, one has $\driftX(t,\,\cdot\,) \in C^2(\mathbb{R})$. Moreover, for any $r>0$, there exists a $K_r <\infty$ such that one has
        \begin{equation*}
            \lvert \driftX(t,x)\rvert + \lvert \partial_t \driftX(t,x)\rvert + \lvert \partial_{x} \driftX(t,x) \rvert + \lvert \partial_{xx} \driftX(t,x) \rvert \leq K_r , \quad t\in[0,1], \,\lvert x \rvert \leq r.
        \end{equation*}
\end{enumerate} 
For $n\geq 1$ and $k=1,\ldots,n,$ introduce a discrete scheme drift $\driftZeta_{n,k}$ and diffusion $\varianceZeta_{n,k}^{1/2}$ coefficients by
            \begin{equation}\label{defn:b}
                \driftZeta_{n,k}(x) := 
                    (\driftX + \half\Delta t_{n,k}(\partial_t \driftX+\driftX\partial_x\driftX + \half\partial_{xx}\driftX))(t_{n,k-1},x)
            \end{equation}
and
 \begin{equation}\label{defn:sigma}
                \varianceZeta_{n,k}^{1/2}(x) :=  
                    1 + \half\Delta t_{n,k} \partial_x \driftX  (t_{n,k-1},x).
            \end{equation}
For each fixed $n \geq 1$, the $n$th second-order weak Taylor approximation of the diffusion \eqref{defn:sde_X} is defined as the discrete time process $\zeta_n :=\{\zeta_{n,k}\}_{k=0}^n$, specified by $\zeta_{n,0} = x_0$ and 
    \begin{equation*}
            \zeta_{n,k}= \zeta_{n,k-1} + \driftZeta_{n,k}(\zeta_{n,k-1})\Delta t_{n,k} + \alpha_{n,k}^{1/2}(\zeta_{n,k-1})(\Delta t_{n,k})^{1/2}Z_{n,k},\quad k=1,\ldots,n,
    \end{equation*}
where $\{Z_{n,k}\}_{k=1}^n$ is a triangular array of independent standard normal random variables.  For more detail on weak Taylor approximations to solutions of stochastic differential equations, see Ch.\,14 in \cite{Kloeden1992}. Clearly, the conditional distributions of the increments $\zeta_{n,k}- \zeta_{n,k-1}$ given $\zeta_{n,k-1}$ are Gaussian, and so the transition probabilities of the discrete time process $\zeta_{n}$ can be easily obtained. 

\begin{remark}
Note that directly using a weak Taylor expansion for a general diffusion process with a space-dependent diffusion coefficient results in a non-central $\chi^2$ distribution for the transition probabilities (see e.g. \cite{Elerian1998}), which artificially limits the domain of the approximating process. For other approaches to approximating the transition density of a general diffusion process, see e.g. \cite{Hurn2007}, \cite{Li2013} and the references therein.
\end{remark}

Step (iii). Next we further approximate the discrete time continuous state space process $\zeta_n$ with a discrete time discrete state space Markov chain $\xi_n:=\{\xi_{n,k}\}_{k=0}^n$, whose transition probabilities are based on the normalised values of the transition density of $\zeta_n$. 

To improve the convergence rate for our approximations to $\p(X \in G)$, we construct our Markov chains $\xi_n$ choosing, generally speaking, different state spaces for each time step. Namely, the state space $E_{n,k}$ for $\xi_{n,k}$, $k=0,1,\ldots,n$, is chosen to be a lattice such that $g^{\pm}(t_{n,k}) \in E_{n,k}$. This modification improves upon the Markov chain approximation suggested in \cite{fuwu} and is widely used for accelerating the convergence rate of numerical schemes in barrier option pricing (see e.g. \cite{cheuk96}). More precisely, the spaces $E_{n,k}$ are constructed as follows. Let $g_{n,k}^{\pm} := g^{\pm}(t_{n,k})$, $k=1,\ldots,n,$ and, for fixed $\delta\in (0,\half]$ and $\gamma>0$, set
    \begin{equation*}
        \scale_{n,k}   := 
            \begin{cases}
                \frac{(g_{n,k}^+ -g_{n,k}^-)/(\Delta t_{n,k})^{1/2 +\delta} }{\floor{\gamma(g_{n,k}^+ -g_{n,k}^-)/(\Delta t_{n,k})^{1/2 +\delta}}},& 1 \leq k<n,\\
                \frac{(g^+(1) -g^-(1))/\Delta t_{n,n} }{\floor{\gamma(g^+(1) -g^-(1))/\Delta t_{n,n}}},& k=n,
            \end{cases}
    \end{equation*}
assuming that $n$ is large enough such that the integer parts in all the denominators are non-zero. We set the time-dependent space lattice step sizes to be
    \begin{equation}\label{eqn:h}
        h_{n,k} := 
        \begin{cases}
            \scale_{n,k}(\Delta t_{n,k})^{1/2 +\delta},& 1 \leq k < n, \\
            \scale_{n,n}\Delta t_{n,n}, & k =n.
        \end{cases}
    \end{equation}

\begin{remark}
The choice of the lattice spans in \eqref{eqn:h} can be explained as follows. Our approximation scheme involves replacing the original boundaries with piecewise linear ones (in Step (v)), which introduces an error of the order $O(n^{-2})$ (under  assumption \eqref{eqn:d2} and given that the functions $g^{\pm}$ are twice continuously differentiable). Therefore, there is no point in using $h_{n,k}$ smaller than necessary to achieve the above precision. Heuristic arguments (to be published elsewhere) show that it suffices to choose $h_{n,n}\asymp n^{-1}$, whereas $h_{n,k} \asymp n^{-\frac{1}{2}-\delta},$ $k <n,$ can be much larger. This is so because our Markov chain computational algorithm can be viewed as repeated trapezoidal quadrature and the partial derivative in $y$ of the taboo (on boundary crossing) joint density of $(X(t_{n,k-1}),X(t_{n,k+1}))$ given $X(t_{n,k})=y$ at the boundary $y =g(t_{n,k})$ at all time steps before the terminal time is zero, ensuring a higher approximation rate at these steps compared to the terminal one (cf. the Euler--Maclaurin formula)
\end{remark}

\begin{remark}
If we used $\delta =0$ in \eqref{eqn:h}, there would be no convergence of the sequence of processes $\{\xi_n\}_{n \geq 1}$ to the desired diffusion limit. This is because there would no convergence of the moments of the increments (cf. Lemma \ref{lemma:moment_gauss_sde}). Note also that, Instead of using the power function \eqref{eqn:h}, we could take $h_{n,k}:= w_{n,k}(\Delta t_{n,k})^{1/2}\psi(\Delta t_{nk,})$ for some $\psi(x) \to 0$ as $x \downarrow 0$, with $w_{n,k} := v_{n,k}/\floor{v_{n,k}}$ where
    \begin{equation*}
        v_{n,k} := \frac{g_{n,k}^+ -g_{n,k}^-}{(\Delta t_{n,k})^{1/2}\psi(\Delta t_{n,k})}
    \end{equation*}
(in our case, $\psi(x) = x^{\delta}/\gamma$). We chose the power function for simplicity's sake.
\end{remark}

The state space for $\xi_{n,k}$ is the $h_{n,k}$-spaced lattice  
    \begin{equation*}
        E_{n,k}:= \{ g_{n,k}^+ - j h_{n,k} : j \in \mathbb{Z}\},\quad k = 1,\ldots,n.
    \end{equation*}
We also put $E_{n,0}:=\{x_0\}$ and
    \begin{equation*}
        E_{n}:= E_{n,0}\times E_{n,1} \times \cdots \times E_{n,n}.
    \end{equation*}
Note that $\max_{1\leq k\leq n}\lvert  \scale_{n,k} - \gamma^{-1}\rvert \to 0$ as $n\to\infty$ and $\gamma^{-1}\leq \scale_{n,k}\leq 2\gamma^{-1}$ for all $1\leq k\leq n$ (assuming, as above, that $n$ is large enough).

Further, for $k=1,\ldots,n,$ we let
    \begin{equation}\label{defn:mu_and_sigma}
        \driftGauss_{n,k}(x) := \driftZeta_{n,k}(x)\Delta t_{n,k},\quad \varianceGauss_{n,k}(x):= \varianceZeta_{n,k}(x)\Delta t_{n,k},
    \end{equation}
and define $\xi_{n,k}$, $k=1,\ldots,n,$ $n\geq 1,$ as a triangular array of random variables, where each row forms a Markov chain with one-step transition probabilities given by
    \begin{equation*}
       p_{n,k}(x,y) := 
        \p(\xi_{n,k}=y\,|\,\xi_{n,k-1}=x)= \varphi(y \,|\,  x + \driftGauss_{n,k}(x),\varianceGauss_{n,k}(x))\frac{h_{n,k}}{\C(x)}
    \end{equation*} 
for $(x,y) \in E_{n,k-1}\times E_{n,k}$, where $\varphi(x\,|\,\mu,\sigma^2):= (2\pi \sigma^2)^{-1/2}e^{-(x -\mu)^2/(2\sigma^2) }$, $x\in\mathbb R$, denotes the Gaussian density with mean $\mu$ and variance $\sigma^2$ and
\begin{equation}\label{defn:scalingFactor}
        \C(x) := \sum_{y \in E_{n,k}}\varphi(y \,|\, x + \driftGauss_{n,k}(x),\varianceGauss_{n,k}(x))h_{n,k}, \quad x \in \mathbb{R}.
    \end{equation}
\begin{remark}
Choosing suitable $\delta < \frac{1}{2}$ and $\gamma>0$ in the definition of $h_{n,k}$ for $k <n$ can be used to reduce computational burden. In our numerical experiments, we found that reducing the value of $\delta \in (0,\frac{1}{2})$ did not negatively affect the empirical convergence rates for boundary crossing probabilities if $\gamma$ is sufficiently large ($\gamma \geq 1.5$). In the case $\delta =0$, our proposed scheme no longer converges since the infinitesimal moments do not converge (cf. Lemma \ref{lemma:moment_gauss_sde}). To restore convergence, one could use the adjusted transition probabilities suggested in \cite{fuwu}. However, we would not be able to apply the shifting state space methodology above since the approximation in \cite{fuwu} relies on adjusting the transition probabilities on a state space that is not changing each time step. 
\end{remark}

Step (iv). As an initial approximation for $\p(X \in G )$, one could take 
    \begin{equation*}
        \p(g_{n,k}^-< X(t_{n,k}) < g_{n,k}^+, k=1,\ldots,n),
    \end{equation*}
which can in turn be approximated using the Chapman--Kolmogorov equations by
    \begin{equation*}
        \p(g_{n,k}^- < \xi_{n,k} < g_{n,k}^+, \, k = 1,\ldots,n) 
        = \sum_{\bm{x}\in E_n^G} \prod_{k=1}^n p_{n,k}(x_{k-1},x_k),
    \end{equation*}
where $\bm{x} = (x_0,x_1,\ldots,x_n)$ and $E_{n}^G:= E_{n,0}\times E_{n,1}^G \times \cdots \times E_{n,n}^G$,
\begin{equation*}
    E_{n,k}^G:= \{x \in E_{n,k} : g^-_{n,k} < x < g^+_{n,k}\},\quad k = 1,\ldots,n.
\end{equation*}
Without loss of generality, one can assume that $n$ is large enough so that none of $E_{n,k}^G$ is empty.

An approximation of this kind was used in \cite{fuwu} to approximate the boundary crossing probability of the Brownian motion. Such discrete time approximations of boundary crossing probabilities are well-known to converge slowly to the respective probability in the ``continuously monitored'' case since they fail to account for the probability of crossing the boundary by the continuous time process at an epoch inside a time interval between consecutive points on the time grid used. 

In order to correct for this, so-called ``continuity corrections'' have been studied in the context of sequential analysis \cite{Siegmund1982} and, more recently, in the context of barrier option pricing \cite{Broadie1997}. These types of corrections have also been used in \cite{potz01} to correct for discrete time monitoring bias in Monte Carlo estimates of the boundary crossing probabilities of the Brownian motion. Without such a correction, using the classical result from \cite{nagaev}, the convergence rate of the approximation from \cite{fuwu} was shown to be $O(n^{-1/2})$. However, as our numerical experiments demonstrate, using our more accurate approximations of the transition probabilities in conjunction with the Brownian bridge correction greatly improves it from $O(n^{-1/2})$ to $O(n^{-2})$. 

In the case of the standard Brownian motion, the correction consists of simply multiplying the one-step transition probabilities by the non-crossing probability of a suitably pinned Brownian bridge. Due to the local Brownian nature of a diffusion process, it was shown in~\cite{baldi02} that the leading order term of the diffusion bridge crossing probability is given by an expression close to that of the Brownian bridge. Thus, to account for the possibility of the process $X$ crossing the boundary inside time intervals $[t_{n,k-1},t_{n,k}]$, we define a process $\Xbb := \{\Xbb(t)\}_{t\in[0,1]}$ which interpolates between the subsequent nodes $(t_{n,k}, \xi_{n,k})$ with a collection of independent Brownian bridges. For $k=1,\ldots,n,$ we set
\begin{equation}\label{defn:xbb}
        \Xbb(t) := \xi_{n,k-1}  + (\xi_{n,k} - \xi_{n,k-1})\frac{t-t_{n,k-1} }{\Delta t_{n,k}}+B_{n,k}^{\circ}(t),\quad t \in [t_{n,k-1},t_{n,k}],
    \end{equation}
where $B_{n,k}^{\circ}$ are independent Brownian motions ``pinned'' at the time-space points $(t_{n,k-1},0)$ and $(t_{n,k},0)$, these Brownian bridges being independent of $\xi_n$. Analogous to Theorem 1 in \cite{nov99}, using the Chapman--Kolmogorov equations, the non-crossing probability of the boundaries $g^{\pm}$ for $\Xbb$ can be expressed as
    \begin{equation}\label{eqn:Xbb_bcp}
        \p(\Xbb \in G)= \e\prod_{k=1}^{n}\left(1-\pbb_{n,k}(g^-,g^+\,|\, \xi_{n,k-1},\xi_{n,k})\right),
    \end{equation}
where $1-\pbb_{n,k}(g^-,g^+\, | \, x,y)$ is the probability that the trajectory of a Brownian motion pinned at the points $(t_{n,k-1},x)$ and $(t_{n,k},y)$ stays between the boundaries $g^{\pm}(t)$ during the time interval $[t_{n,k-1},t_{n,k}]$. 

Step (v). The above representation can be equivalently written as a matrix product,
\begin{equation}\label{eqn:bcp_gauss}
        \p( \Xbb \in G ) = \mathbf{T}_{n,1}\mathbf{T}_{n,2}\cdots \mathbf{T}_{n,n} \mathbf{1}^\top,
    \end{equation}
where $\bm{1}=(1,\ldots,1)$ is a row vector of length $\lvert E_{n,n}^G \rvert$, and the sub-stochastic matrices $\mathbf{T}_{n,k}$ of dimensions $\lvert E_{n,k-1}^G \rvert \times \lvert E_{n,k}^G \rvert$ have entries equal to the respective taboo transition probabilities
    \begin{equation*}
            (1-\pbb_{n,k}(g^-,g^+ \,|\, x,y))p_{n,k}(x,y), \quad  (x,y) \in E_{n,k-1}^G \times E_{n,k}^G.
    \end{equation*}
Unfortunately, closed-form expressions for curvilinear boundary Brownian bridge crossing probabilities $\pbb_{n,k}(g^-,g^+\,|\, x,y)$ are known in a few special cases only, so we approximate the original boundaries $g^{\pm}$ with piecewise linear functions $f_n^{\pm}$, which linearly interpolate between the subsequent nodes $(t_{n,k},g_{n,k}^{\pm})$ for $k=0,\ldots,n.$ In the special case of a one-sided boundary (when $g^-=-\infty$), the expression for the linear boundary crossing probability of the Brownian bridge is well-known (see, e.g.,~p.\,63 of \cite{borodin}):
    \begin{equation*}
        \pbb_{n,k}(-\infty ,f_n^+ \,|\, x,y) =  \mbox{$\exp\{ \frac{-2}{\Delta t_{n,k}}(g_{n,k-1}^+ - x)(g_{n,k}^+ -y) \}$},\quad x<g_{n,k-1}^+,y<g_{n,k}^+.
    \end{equation*}
In the case when both the upper and lower boundaries $f_n^{\pm}$ are linear, if the time interval $\Delta t_{n,k}$ is sufficiently small, we can approximate the Brownian bridge crossing probability with the sum of one-sided crossing probabilities:
    \begin{equation*}
        \pbb_{n,k}(f_n^-,f_n^+\,|\, x,y) = \pbb_{n,k}(f_{n}^-,\infty \,|\, x,y) + \pbb_{n,k}(-\infty,f_{n}^+ \,|\, x,y) - \vartheta(x,y,\Delta t_{n,k}),
    \end{equation*}
where the positive error term $\vartheta$ admits the obvious upper bound
    \begin{align*}
        &\vartheta(x,y,\Delta t_{n,k})\\
        &\leq \pi_{n,k}(f_n^-,\infty\,|\,x,y)\max_{t\in[t_{n,k-1},t_{n,k}]}P_{t,f_n^-(t);t_{n,k},y}\biggl(\sup_{s\in [t,t_{n,k}]}(W(s)-f_n^+(s))\geq 0\biggr)\\
        &\quad + \pi_{n,k}(-\infty,f_n^+\,|\,x,y)\max_{t\in[t_{n,k-1},t_{n,k}]}P_{t,f_n^+(t);t_{n,k},y}\biggl(\inf_{s\in [t,t_{n,k}]}(W(s)-f_n^-(s))\leq 0\biggr),
    \end{align*}
where $P_{s,a;t,b}(\cdot) := \p(\,\cdot\,|\, W(s)=a,W(t)=b)$. An infinite series expression for $\pbb_{n,k}(f_n^-,f_n^+\,|\, x,y)$ can be found e.g.~in \cite{hall}. 

We can further apply our method to approximate probabilities of the form $\p(X \in G, X(1) \in [a,b])$, for some $[a,b] \subseteq [g^-(1), g^+(1)]$ as well. We first replace the final space grid $E_{n,n}^G$ with
    \begin{equation*}
        E_{n,n}^{[a,b]} := \Big\{a \leq x \leq b : x = b - j\frac{b-a}{\floor{\gamma (b-a)/\Delta t_{n,n}}} , j\in \mathbb{Z}  \Big\}.
    \end{equation*}
Then, instead of \eqref{eqn:bcp_gauss} we have
    \begin{equation*}
        \p(\Xbb \in G,\Xbb(1) \in [a,b]) = \mathbf{T}_{n,1}\mathbf{T}_{n,2} \cdots \mathbf{T}_{n,n}\bm{1}_{[a,b]}^{\top},
    \end{equation*}
where $\mathbf{T}_{n,n}$ is now of dimension $\lvert E_{n,n-1}^G\rvert \times \lvert E_{n,n}^{[a,b]}\rvert$ and $\bm{1}_{[a,b]} = (1,\ldots,1)$ is a row vector of length $\lvert E_{n,n}^{[a,b]}\rvert$.

\begin{remark}
As a function of the ``forward variables'' $(t,y)$, the taboo transition density of the diffusion process $Y$ 
    \begin{equation*}
        u(t,y) := \mbox{$\frac{d}{dy}$}\p(Y(t) \leq y, g^-(r) < Y(r) < g^+(r),\, r\in[0,t]\,|\, Y(0) = y_0)
    \end{equation*}
is the unique solution to the forward Kolmogorov equation 
    \begin{equation*}
        \half\partial_{yy}(\sigma^2(t,y)u(t,y)) -  \partial_y(b(t,y) u(t,y)) -\partial_tu(t,y) =0
    \end{equation*}
in the domain $\{(t,y):t\in (0,1), y \in (g^-(t), g^+(t))\}$ with boundary conditions
    \begin{equation*}
        \begin{cases}
            \lim_{t\downarrow 0}u(t,y) = \delta_{0}(y-y_0),& y \in (g^-(0),g^+(0)),\\
            u(t,g^{\pm}(t)) = 0, & t\in[0,1],
        \end{cases}
    \end{equation*}
where $\delta_0$ is the Dirac delta function. For a proof, see, e.g.,~\cite{lerche}.

Commonly used numerical techniques for approximating the solution to the forward Kolmogorov equation include the method of finite differences \cite{Smith1986}, the finite element method \cite{Johnson1988} and the method of lines \cite{Hundsdorfer2003}. The connection between finite differences and the method of Markov chain approximations is explored in detail in \cite{Kushner1976a} and \cite{Platen2010}. 
\end{remark}
    
\begin{remark}
In the standard Brownian motion case, the matrix multiplication scheme in \eqref{eqn:bcp_gauss} can be seen as recursive numerical integration using the trapezoidal rule (cf. Remark 3 in \cite{nov99}). To construct a Markov chain approximation based on numerical integration techniques that use variable node positioning (e.g., the Gauss--Legendre quadrature), it would require interpolation of the resulting evolved transition density, causing the matrix multiplication in \eqref{eqn:Xbb_bcp} to lose its probabilistic meaning. Efficient numerical integration techniques based on analytic mappings (e.g.~the double--exponential quadrature \cite{Takahasi1973}) that maintain the positions of nodes are numerically feasible, however verifying theoretical weak convergence for the resulting sequence of Markov chains is more difficult. Furthermore, using the trapezoidal rule allows one to use the Euler--Maclaurin summation formula to improve the convergence rate. From our numerical experimentation, the one-dimensional trapezoidal rule that we propose in this paper strikes a good balance between simplicity, flexibility and numerical efficiency.
\end{remark}

\section{Main results}

Set $\nu_n(t):= \max\{k\geq 0 : t \geq t_{n,k}\}$, $t\in [0,1]$, and introduce an auxiliary pure jump process 
    \begin{equation}\label{defn:xl}
        \Xl(t) := \xi_{n,\nu_n(t)},\quad t \in [0,1].
    \end{equation}
Clearly, the trajectories of the process $\Xl$ belong to the Skorokhod space $\Df = \Df([0,1])$ which we will endow with the first Skorokhod metric
    \begin{equation*}
        d(x,y) = \inf_{\lambda \in \Lambda}\Big\{\varepsilon\geq 0: \sup_{t\in[0,1]} \lvert x(t) - y(\lambda(t)) \rvert \leq \varepsilon, \sup_{t\in[0,1]}\lvert\lambda(t)-t \rvert \leq \varepsilon\Big\},\quad x,y\in\Df,
    \end{equation*}    
where $\Lambda$ denotes the class of strictly increasing continuous mappings of $[0,1]$ onto itself (see, e.g.,~Ch.\,3 in \cite{bill}). We will use $\dto$ to denote convergence in distribution of random elements of $(\Df, d)$.

\begin{theorem}\label{thm:gauss_conv}
$X_n \dto X$ as $n\to \infty$.
\end{theorem}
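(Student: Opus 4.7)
The plan is to apply a standard functional limit theorem for triangular arrays of Markov chains converging to a diffusion (for example Theorem 7.4.1 of Ethier--Kurtz or Theorem 11.2.3 of Stroock--Varadhan), which characterises weak convergence in $(\Df,d)$ through (i) convergence of the infinitesimal drift and diffusion coefficients of the chain to $\driftX(t,x)$ and $1$ respectively, (ii) a Lindeberg-type condition on the jumps, and (iii) tightness. Under (C1)--(C3) the martingale problem associated with \eqref{defn:sde_X} is well-posed, so any subsequential weak limit of $\{\Xl\}$ that solves this martingale problem must coincide in law with $X$.

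First I would compute the infinitesimal moments of $\xi_n$. Given $\xi_{n,k-1}=x$, the law of $\xi_{n,k}$ is the Gaussian distribution with mean $x+\driftGauss_{n,k}(x)$ and variance $\varianceGauss_{n,k}(x)$ restricted to $E_{n,k}$ and renormalised by $\C(x)$. Because $\delta>0$ in \eqref{eqn:h} forces $h_{n,k}/\sqrt{\Delta t_{n,k}}=\scale_{n,k}(\Delta t_{n,k})^{\delta}\to 0$, the sum in \eqref{defn:scalingFactor} is a trapezoidal Riemann approximation of the Gaussian integral, so an Euler--Maclaurin estimate gives $\C(x)=1+o(\Delta t_{n,k})$ uniformly on compact $x$-sets. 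Repeating the same analysis for polynomial moments (which is presumably the content of Lemma~\ref{lemma:moment_gauss_sde}), and invoking the second-order construction of $\driftZeta_{n,k}$ and $\varianceZeta_{n,k}$ in \eqref{defn:b}--\eqref{defn:sigma}, yields
\begin{align*}
    \frac{1}{\Delta t_{n,k}}\e[\xi_{n,k}-\xi_{n,k-1}\,|\,\xi_{n,k-1}=x] &\to \driftX(t,x),\\
    \frac{1}{\Delta t_{n,k}}\e[(\xi_{n,k}-\xi_{n,k-1})^2\,|\,\xi_{n,k-1}=x] &\to 1,
\end{align*}
uniformly on compact sets as $t_{n,k-1}\to t$.

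Next I would verify the Lindeberg condition: for each $\varepsilon>0$,
\begin{equation*}
    \frac{1}{\Delta t_{n,k}}\e\bigl[(\xi_{n,k}-\xi_{n,k-1})^{2}\ind_{\{|\xi_{n,k}-\xi_{n,k-1}|\geq \varepsilon\}}\,\big|\,\xi_{n,k-1}=x\bigr]\to 0
\end{equation*}
uniformly on compacts. Since the underlying Gaussian has variance of order $n^{-1}$ and exponentially decaying tails, and the lattice sum is a negligible perturbation of the corresponding integral, this reduces to a standard Gaussian tail bound using (C3). Tightness of $\{\Xl\}$ in $(\Df,d)$ then follows from an Aldous-type criterion based on the uniform second-moment bound on the jumps and the fact that jumps occur only at the grid points $t_{n,k}$, whose maximal spacing $\maxTimeSpan/n$ tends to zero; alternatively, one can invoke a moment bound on $\max_{0\leq k\leq n}|\xi_{n,k}|$ obtained by Gronwall's inequality applied to $\e|\xi_{n,k}|^2$ using the linear-growth bound in (C1).

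The main obstacle will be making the Riemann-sum and moment estimates uniform over $x$ ranging over non-compact sets; the transition kernel concentrates on scale $n^{-1/2}$ but the lattice $E_{n,k}$ is infinite, so explicit Euler--Maclaurin remainder bounds are needed, together with a localisation argument that exploits (C3) and the a priori moment bounds on $\xi_{n,k}$ to truncate $x$ to a compact region with overwhelming probability. Once these local estimates are in place and tightness is established, identification of any subsequential limit as a solution to the martingale problem for \eqref{defn:sde_X} is routine, yielding $\Xl\dto X$.
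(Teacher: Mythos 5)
Your proposal follows essentially the same route as the paper: both invoke the Ethier--Kurtz Markov chain convergence theorem (Theorem 4.1 of Ch.\,7), verify convergence of the localised infinitesimal drift and variance together with vanishing-jump conditions, control the lattice-sum errors in the normalising constants and conditional moments (you via Euler--Maclaurin for the trapezoidal rule, the paper via the equivalent Poisson summation formula, which yields the explicit $e^{-Mn^{2\delta}}$ bounds), and identify the limit through well-posedness of the martingale problem for \eqref{defn:sde_X}. The approach is sound and matches the paper's proof in structure and substance.
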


Due to the small amplitude of the interpolating Brownian bridges' oscillations, it is unsurprising that the sequence of processes $\{\Xbb\}$ also converges weakly to $X$.

\begin{corollary}\label{corr:gauss_bb_conv}
$\Xbb \dto X$ as $n\to\infty$.
\end{corollary}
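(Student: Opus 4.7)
The plan is to derive the corollary from Theorem~\ref{thm:gauss_conv} by a converging-together (Slutsky) argument in the Polish space $(\Df,d)$. Since $\Xbb$ and $\Xl$ are constructed on a common probability space (the bridges $B^{\circ}_{n,k}$ being taken independent of the chain $\xi_n$), it is enough to show that $d(\Xbb,\Xl)\to 0$ in probability as $n\to\infty$; combined with $\Xl\dto X$ from Theorem~\ref{thm:gauss_conv}, this will yield $\Xbb\dto X$.

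For $t\in[t_{n,k-1},t_{n,k})$ we have
\[
\Xbb(t)-\Xl(t)=(\xi_{n,k}-\xi_{n,k-1})\frac{t-t_{n,k-1}}{\Delta t_{n,k}}+B^{\circ}_{n,k}(t),
\]
while $\Xbb(t_{n,k})=\Xl(t_{n,k})=\xi_{n,k}$. Taking the identity time change in the definition of the Skorokhod metric therefore gives the deterministic bound
\[
d(\Xbb,\Xl)\le \norms{\Xbb-\Xl}_\infty\le J_n+M_n,
\]
where $J_n:=\max_{1\le k\le n}|\xi_{n,k}-\xi_{n,k-1}|$ is the maximal jump of $\Xl$ and $M_n:=\max_{1\le k\le n}\sup_{t\in[t_{n,k-1},t_{n,k}]}|B^{\circ}_{n,k}(t)|$ is the maximal excursion of the interpolating bridges.

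I would next show that both $J_n$ and $M_n$ vanish in probability. For $M_n$, a union bound together with the scaling identity $\sup_{[0,T]}|B^{\circ}|\stackrel{d}{=}\sqrt{T}\sup_{[0,1]}|B^{\circ}|$ and the classical tail estimate $\p(\sup_{[0,1]}|B^{\circ}|\ge c)\le 2e^{-2c^2}$ yield, using $\Delta t_{n,k}\le\maxTimeSpan/n$ from \eqref{eqn:d2},
\[
\p(M_n\ge\epsilon)\le 2n\exp(-2\epsilon^2 n/\maxTimeSpan)\to 0, \quad \epsilon>0.
\]
For $J_n$ I would invoke Theorem~\ref{thm:gauss_conv} together with the Skorokhod representation theorem: there exist versions $(\widetilde X_n,\widetilde X)$ on a common probability space with $d(\widetilde X_n,\widetilde X)\to 0$ a.s., and since $\widetilde X$ has continuous sample paths, convergence in $d$ automatically upgrades to uniform convergence on $[0,1]$. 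Consequently, the maximum jump of $\widetilde X_n$---which is equidistributed with $J_n$---is bounded by $2\norms{\widetilde X_n-\widetilde X}_\infty\to 0$ a.s., so that $J_n\to 0$ in probability.

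The main obstacle has, in effect, already been handled by Theorem~\ref{thm:gauss_conv}: once weak convergence of $\Xl$ to the continuous limit $X$ is established, the remaining steps are routine. The one point worth flagging is the use of the identity time change to bound $d$ by the uniform norm, which is valid only because $\Xbb$ is continuous and therefore dispenses with the need to construct a non-trivial $\lambda\in\Lambda$ compensating for the jumps of $\Xl$.
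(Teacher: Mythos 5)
Your proposal is correct and follows essentially the same route as the paper: bound $d(\Xbb,\Xl)$ by the uniform norm, split the difference into the interpolating-bridge excursions (controlled by the Brownian bridge maximum tail bound and a union bound) and the maximal jump of $\Xl$ (which vanishes in probability because $\Xl\dto X$ with $X$ continuous), then conclude by a converging-together argument. The only cosmetic differences are that the paper routes the decomposition through the polygonal process $\Xh$ and simply cites the continuity of $X$ for the jump term, where you spell it out via the Skorokhod representation theorem.
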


The following result is a theoretical justification of the Markov chain approximation method proposed in this paper.

\begin{corollary}\label{thm:sde_bcp_conv}
Assume that condition {\em{(C3)}} holds. Let $(g^-,g^+)$, $(g_n^-,g_n^+)$, $n\geq 1$, be elements of $\mathcal{G}$ such that $\norms{g_n^{\pm} - g^{\pm}}_{\infty} \to 0$ as $n\to \infty$, $G_n := S(g_n^-,g_n^+)$. Then, for any Borel set $B$ with $\partial B$ of zero Lebesgue measure,
    \begin{equation*}
         \lim_{n\to\infty}\p(\Xbb \in G_n, \Xbb(1) \in B)= \p(X \in G, X(1) \in B).
    \end{equation*}
\end{corollary}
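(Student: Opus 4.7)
The plan is to combine Corollary~\ref{corr:gauss_bb_conv} with a sandwich argument to handle the moving boundaries $g^{\pm}_n$, and then to extract convergence of crossing probabilities by the Portmanteau theorem on appropriate $P_X$-continuity sets. Each $\Xbb$ has continuous sample paths by the construction \eqref{defn:xbb}, and $X$ is continuous; since Skorokhod convergence of probability measures to a law concentrated on $\Cf$ is equivalent to weak convergence in $(\Cf,\norms{\cdot}_\infty)$, Corollary~\ref{corr:gauss_bb_conv} yields $\Xbb \dto X$ in $\Cf$.

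For each $\varepsilon>0$, set $G^{\pm}_\varepsilon := S(g^-\mp\varepsilon,g^+\pm\varepsilon)$. The assumption $\norms{g^{\pm}_n - g^{\pm}}_\infty\to 0$ gives $G^-_\varepsilon \subseteq G_n \subseteq G^+_\varepsilon$ for all sufficiently large $n$, so that
\begin{equation*}
    \p(\Xbb \in G^-_\varepsilon,\Xbb(1)\in B) \leq \p(\Xbb \in G_n,\Xbb(1)\in B) \leq \p(\Xbb \in G^+_\varepsilon,\Xbb(1)\in B).
\end{equation*}
Write $A^{\pm}_\varepsilon := \{x\in\Cf: x\in G^{\pm}_\varepsilon,\, x(1)\in B\}$. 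The idea is to apply the Portmanteau theorem to the sets $A^{\pm}_{\varepsilon_k}$ along a sequence $\varepsilon_k\downarrow 0$ chosen so that each $A^{\pm}_{\varepsilon_k}$ is a $P_X$-continuity set, and then pass to the limit as $\varepsilon_k \downarrow 0$.

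The main obstacle is verifying $\p(X\in\partial A^{\pm}_\varepsilon)=0$. The boundary $\partial A^{\pm}_\varepsilon$ in $\Cf$ splits into paths whose extremal excursions $M^+:=\sup_t(X(t)-g^+(t))$ or $M^-:=\sup_t(g^-(t)-X(t))$ equal $\pm\varepsilon$ while staying in the closed tube, plus paths with $x(1)\in\partial B$. Since \eqref{defn:sde_X} has unit diffusion coefficient and drift controlled by (C1) and (C3), Girsanov's theorem renders the law of $X$ equivalent to Wiener measure, so $X(1)$ admits a density and $\p(X(1)\in\partial B)=0$ under the Lebesgue-null boundary assumption on $B$. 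The real-valued random variables $M^{\pm}$ each have at most countably many atoms, so one can choose $\varepsilon_k\downarrow 0$ outside the atoms of both distributions. A short strong-Markov argument, combined with the oscillation of the unit-variance noise, further shows $\p(M^{\pm}=0)=0$: on $\{M^+=0,\tau^+<1\}$ (where $\tau^+$ is the first hitting time of $g^+$) the post-$\tau^+$ increment $X(\tau^++s)-g^+(\tau^++s)$ is a diffusion starting at $0$ whose driving Brownian component forces an immediate excursion to positive values a.s., and $\{\tau^+=1\}\subseteq\{X(1)=g^+(1)\}$ is null by the density of $X(1)$. Portmanteau then yields $\p(\Xbb\in G^{\pm}_{\varepsilon_k},\Xbb(1)\in B)\to \p(X\in G^{\pm}_{\varepsilon_k},X(1)\in B)$, and sending $\varepsilon_k\downarrow 0$ in conjunction with the continuity of measure and $\p(X\in\partial G,X(1)\in B)\leq \p(M^+=0)+\p(M^-=0)=0$ closes the sandwich and gives the claim.
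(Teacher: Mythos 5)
Your proposal follows essentially the same route as the paper's proof: sandwich $G_n$ between the $\varepsilon$-inflated and $\varepsilon$-deflated tubes $S(g^-\mp\varepsilon,g^+\pm\varepsilon)$, apply Corollary~\ref{corr:gauss_bb_conv} together with the Portmanteau theorem to $P_X$-continuity sets (using $\partial(A_1\cap A_2)\subseteq\partial A_1\cup\partial A_2$ to split off the terminal-value condition $x(1)\in B$), and then let $\varepsilon\downarrow 0$. The only substantive difference is how the $P_X$-nullity of the relevant boundaries is justified: the paper simply invokes the continuity of the distributions of $\sup_{t}(X(t)-g^+(t))$ and $\inf_t(X(t)-g^-(t))$ (so that every $\varepsilon$ is admissible), whereas you select $\varepsilon_k\downarrow 0$ avoiding the at most countably many atoms of these laws and then handle the single value $0$ by a separate strong-Markov argument. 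The atom-avoidance device is a nice, self-contained way to get the Portmanteau step, but be aware that your claim that the post-$\tau^+$ excursion of $X-g^+$ is ``immediately positive a.s.'' uses more than the bare continuity of $g^+$ assumed in the definition of $\mathcal{G}$: it is the standard argument for Lipschitz or differentiable boundaries, but it can fail if $g^+$ grows super-diffusively (e.g.\ like $s^{1/3}$) after the hitting time, so the closing step $\p(M^{\pm}=0)=0$ still ultimately rests on the same continuity-of-the-supremum-law fact that the paper cites rather than on the heuristic oscillation argument alone. The Girsanov remark for the density of $X(1)$ is fine and matches the paper's (unelaborated) claim that $X(1)$ has a continuous density.
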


The next result can be used, for instance, for computing risk-neutral prices of barrier options (see, e.g., \cite{borov}). It follows immediately from the previous corollary.  
\begin{corollary}
For any piecewise continuous $f : [g^-(1),g^+(1)] \to \mathbb{R}$, 
    \begin{equation*}
        \lim_{n \to \infty}\e f(\Xbb(1))\bm{1}\{\Xbb \in G_n\} = \e f(X(1))\bm{1}\{X \in G\}.
    \end{equation*}
\end{corollary}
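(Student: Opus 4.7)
The plan is to reduce the claim to the indicator-function case already established by Corollary \ref{thm:sde_bcp_conv} via a bracketing approximation of $f$ by step functions. Since $f$ is piecewise continuous on the compact interval $[g^-(1), g^+(1)]$, it is bounded, say $\lvert f\rvert \leq M$, and has only finitely many discontinuity points $x_1,\ldots,x_m$.

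First I would fix $\varepsilon > 0$ and partition $[g^-(1), g^+(1)]$ into half-open intervals $I_1,\ldots,I_N$ whose endpoints lie outside $\{x_1,\ldots,x_m\}$, such that each $I_j$ is either \emph{good} (contains no discontinuity and has $\sup_{I_j} f - \inf_{I_j} f \leq \varepsilon$) or \emph{bad} (contains exactly one $x_i$ and has length at most $\varepsilon/m$). Setting
\begin{equation*}
    \underline{f}_\varepsilon := \sum_{j=1}^N \bigl(\inf\nolimits_{I_j} f\bigr)\ind_{I_j}, \qquad \overline{f}_\varepsilon := \sum_{j=1}^N \bigl(\sup\nolimits_{I_j} f\bigr)\ind_{I_j},
\end{equation*}
yields $\underline{f}_\varepsilon \leq f \leq \overline{f}_\varepsilon$ pointwise. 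Each $I_j$ is a Borel set whose boundary $\partial I_j$ has zero Lebesgue measure, so Corollary \ref{thm:sde_bcp_conv} applied with $B = I_j$ gives $\lim_n \p(\Xbb \in G_n, \Xbb(1) \in I_j) = \p(X \in G, X(1) \in I_j)$, and linearity then yields the analogous limit with $\underline{f}_\varepsilon$ or $\overline{f}_\varepsilon$ in place of $f$.

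Next I would use the sandwich $\underline{f}_\varepsilon \leq f \leq \overline{f}_\varepsilon$ to squeeze both the $\liminf$ and the $\limsup$ of $\e f(\Xbb(1))\ind\{\Xbb \in G_n\}$ between $\e\underline{f}_\varepsilon(X(1))\ind\{X \in G\}$ and $\e\overline{f}_\varepsilon(X(1))\ind\{X \in G\}$, and bound the gap by
\begin{equation*}
    \e(\overline{f}_\varepsilon - \underline{f}_\varepsilon)(X(1))\ind\{X \in G\} \leq \varepsilon + 2M \sum_{j\,:\,I_j\text{ bad}} \p(X(1) \in I_j).
\end{equation*}
Each bad interval lies in a shrinking neighbourhood of some $x_i$, so as $\varepsilon \downarrow 0$ the right-hand side tends to $2M\sum_{i=1}^m \p(X(1) = x_i)$. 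The only ingredient going beyond the previous corollary is the (mild and standard) observation that this limit equals zero: under the uniform ellipticity in (C2) together with the regularity in (C1) and (C3), the diffusion $X$ has a smooth transition density on $(0,1]$, hence $X(1)$ has no atoms. This is the main (and essentially only) point to check; once it is in place, letting $\varepsilon \downarrow 0$ concludes the proof.
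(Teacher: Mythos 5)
Your proof is correct and is exactly the argument the authors intend: the paper simply states that the corollary ``follows immediately from the previous corollary,'' and your bracketing of $f$ by step functions built on intervals with null boundary, combined with the absence of atoms for $X(1)$ (a fact the paper already invokes in proving the previous corollary), is the standard way to make that reduction precise.
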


\begin{remark}
To accelerate the numerical evaluation of $\p(\Xbb \in G_n)$, we can ignore the normalising factors $\C(x)$ as they are very close to 1. Indeed, let $\widehat{\mathbf{T}}_{n,k}$ be $\lvert E_{n,k-1}^G\rvert \times \lvert E_{n,k}^G\rvert $ matrices with entries
    \begin{equation}\label{defn:q}
        q_{n,k}(x,y) :=(1 - \pbb_{n,k}(f_n^-,f_n^+\,|\, x,y))\varphi(y\,|\, x + \driftGauss_{n,k}(x), \varianceGauss_{n,k}(x))h_{n,k}
    \end{equation}
for $(x,y) \in E_{n,k-1}^G\times E_{n,k}^G$. These matrices differ from the $\mathbf{T}_{n,k}$'s from \eqref{eqn:bcp_gauss} in that they don't involve the factors $\C(x)$. For $M$ and $c_0$ defined in Lemma \ref{lemma:C_sde}, we show below that
    \begin{equation}\label{drop_C}
        \big\lvert \p(\Xbb \in G_n) - \widehat{\mathbf{T}}_{n,1}\widehat{\mathbf{T}}_{n,2}\cdots \widehat{\mathbf{T}}_{n,n}\bm{1}^{\top} \big\rvert \leq c_0n\rho_n^{n-1}\exp\{-Mn^{2\delta}\} ,
    \end{equation}
where $\rho_n := 1 \vee \max_{1\leq j\leq n}\sup_{\lvert x\rvert \leq r} C_{n,j}(x)$. Note that some care must be exercised when choosing $\gamma$ and $\delta$. Using small values of $\gamma$ and $\delta$ will result in the error from replacing the normalising factors with 1 becoming non-negligible. It will be seen from the derivation of (3.3) that when $\delta =0$ the leading term of the error of the left hand side is equal to $2ne^{-2\pi^2\gamma^2}$. Figure 1 illustrates this observation numerically for the boundaries
    \begin{equation}\label{gpm}
        g^{\pm}(t) = \pm \mbox{$\frac{t}{3}$}\cosh^{-1}(2e^{9/(2t)}).
    \end{equation}
\begin{figure}[ht]
\centering
   \includegraphics{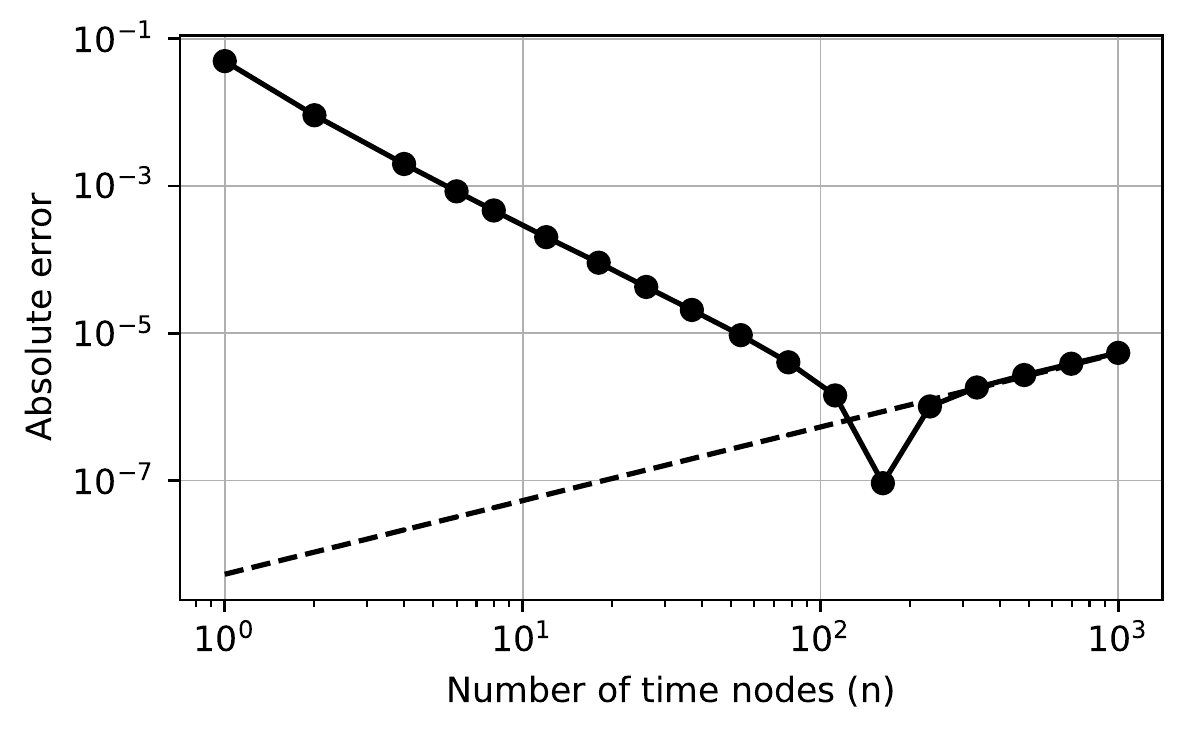}%
   \caption{On this log--log plot, the bullets $\bullet$ show the absolute errors between Markov chain approximations with $\gamma =1$, $\delta =0$ and normalising factors $\C(x)$ replaced with 1, and the true boundary crossing probability of the boundaries from \eqref{gpm}. The dashed line corresponds to the straight line $2ne^{-2\pi^2\gamma^2}$.}
   \label{figure_divergence}
\end{figure}
\end{remark}

\section{Proofs}

We will need the lemmata below to prove Theorem \ref{thm:gauss_conv}. Recall the quantity $K_r$ that appeared in condition (C3) and also notations $\maxTimeSpan$, $\driftZeta$ and $\stddevZeta$ from \eqref{eqn:d2}, \eqref{defn:b} and \eqref{defn:sigma}.  
\begin{lemma}\label{lemma:driftdiffusionbound}
For $r>\lvert x_0\rvert $,
    \begin{equation*}
         \adjustlimits\max_{1\leq k\leq n} \sup_{\lvert x\rvert\leq r} \big(\lvert \driftZeta_{n,k}(x) \rvert \vee   \varianceZeta_{n,k}(x) \big)  \leq K_r' ,
    \end{equation*}
where $K_r' := (\mbox{$K_r + \frac{\maxTimeSpan}{2}K_r(\frac{3}{2} + K_r )$}) \vee (1 + \mbox{$\frac{\maxTimeSpan}{2}K_r$})^2$.
\end{lemma}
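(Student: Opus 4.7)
The lemma is essentially a straightforward exercise in the triangle inequality combined with the uniform bounds supplied by condition (C3) and the time-step bound \eqref{eqn:d2}, so I will keep the plan brief.

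First I would handle the drift. Starting from the defining expression \eqref{defn:b}, I write
\begin{equation*}
\lvert \driftZeta_{n,k}(x)\rvert
 \leq \lvert \driftX(t_{n,k-1},x)\rvert
   + \mbox{$\frac12$}\Delta t_{n,k}\Bigl(\lvert \partial_t \driftX\rvert
   + \lvert \driftX\rvert\lvert \partial_x\driftX\rvert
   + \mbox{$\frac12$}\lvert \partial_{xx}\driftX\rvert\Bigr)(t_{n,k-1},x).
\end{equation*}
Condition (C3) bounds each of the four quantities $\lvert \driftX\rvert,\lvert \partial_t\driftX\rvert,\lvert \partial_x\driftX\rvert,\lvert \partial_{xx}\driftX\rvert$ by $K_r$ uniformly in $t\in[0,1]$ and $\lvert x\rvert\le r$, so the product $\lvert \driftX\cdot \partial_x\driftX\rvert$ is dominated by $K_r^2$. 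From \eqref{eqn:d2} we have $\Delta t_{n,k}\le \maxTimeSpan/n\le \maxTimeSpan$. Collecting the constants $(\frac12+\frac14)K_r=\frac34 K_r$ from the first and third interior terms and $\frac12 K_r^2$ from the middle term yields
\begin{equation*}
\lvert \driftZeta_{n,k}(x)\rvert \leq K_r + \mbox{$\frac{\maxTimeSpan}{2}$}K_r\bigl(\mbox{$\frac32$}+K_r\bigr),
\end{equation*}
which matches the first half of $K_r'$.

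Next I would treat the diffusion coefficient. From \eqref{defn:sigma},
\begin{equation*}
\varianceZeta_{n,k}(x) = \bigl(1 + \mbox{$\frac12$}\Delta t_{n,k}\,\partial_x\driftX(t_{n,k-1},x)\bigr)^2
\leq \bigl(1 + \mbox{$\frac{\maxTimeSpan}{2}$}K_r\bigr)^2,
\end{equation*}
again by (C3) and \eqref{eqn:d2}. Taking the maximum over $1\leq k\leq n$ and supremum over $\lvert x\rvert\leq r$ of the larger of the two bounds gives the stated $K_r'$.

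There is no real obstacle here — everything hinges on (C3) providing simultaneous control of $\driftX$ together with its three relevant derivatives on compact $x$-sets, and on the uniform bound $\Delta t_{n,k}\le \maxTimeSpan$ from \eqref{eqn:d2}. The only minor point to be careful about is remembering that $\varianceZeta_{n,k}$ is the \emph{square} of the expression in \eqref{defn:sigma}, so that the bound for $\varianceZeta_{n,k}$ carries the exponent $2$; otherwise one would incorrectly compare $\varianceZeta_{n,k}^{1/2}$ to the drift bound.
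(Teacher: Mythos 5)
Your proposal is correct and coincides with the paper's own (one-line) argument: substitute the uniform bounds from condition (C3) and the time-step bound \eqref{eqn:d2} directly into the definitions \eqref{defn:b} and \eqref{defn:sigma}, keeping track of the fact that $\varianceZeta_{n,k}$ is the square of the expression in \eqref{defn:sigma}. The constants you collect reproduce $K_r'$ exactly, so there is nothing to add.
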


The result is directly obtained by substituting the upper bound on the partial derivatives of $\driftX$ from condition (C3) and inequality \eqref{eqn:d2} into the definitions of $\driftZeta$ and $\varianceZeta$ in \eqref{defn:b} and \eqref{defn:sigma} respectively.

\begin{lemma}\label{lemma:C_sde}
For any $r>\lvert x_0 \rvert $ and $n > (1 + 2 K_r)\maxTimeSpan$, for the normalising factors \eqref{defn:scalingFactor} one has
    \begin{equation*}
        \adjustlimits\max_{1\leq k \leq n}\sup_{x \in E_{n,k}(r)}\lvert \C(x) - 1 \rvert \leq c_0e^{-Mn^{2\delta}},
    \end{equation*}
where $\delta$ is the quantity appearing in \eqref{eqn:h}, $M=M(\gamma,\delta,\eta_2) := \gamma^2\pi^2/(4\maxTimeSpan^{2\delta})$ and $c_0 := 2/(1 - e^{-M})$.
\end{lemma}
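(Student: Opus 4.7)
The plan is to recognise $C_{n,k}(x)$ as a Riemann sum with step $h_{n,k}$ over a shifted arithmetic lattice approximating the trivial integral $\int_{\mathbb{R}}\varphi(y\mid x+\mu_{n,k}(x),\sigma_{n,k}^2(x))\,dy=1$, and then to control the discrepancy via the Poisson summation formula. Because the Fourier transform of a Gaussian density is again a Gaussian (times a phase), the dual series will be dominated by an exponentially small tail, and all that remains will be careful bookkeeping of the constants.

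Concretely, I would fix any reference point $y_0\in E_{n,k}$ (e.g.\ $y_0=g_{n,k}^+$) so that $E_{n,k}=\{y_0-jh_{n,k}:j\in\mathbb{Z}\}$, write $\mu:=x+\mu_{n,k}(x)$ and $\sigma^2:=\sigma_{n,k}^2(x)$, and invoke the Poisson summation formula for the Schwartz function $\varphi(\,\cdot\mid\mu,\sigma^2)$, whose Fourier transform equals $e^{-2\pi i\mu\xi-2\pi^2\sigma^2\xi^2}$. This yields the exact identity
    \begin{equation*}
        C_{n,k}(x)=\sum_{m\in\mathbb{Z}}e^{2\pi im(y_0-\mu)/h_{n,k}}\,e^{-2\pi^2\sigma^2m^2/h_{n,k}^2}.
    \end{equation*}
The $m=0$ term equals $1$; isolating it, taking moduli, using $m^2\geq m$ for $m\geq 1$, and summing the resulting geometric series gives
    \begin{equation*}
        |C_{n,k}(x)-1|\leq\frac{2e^{-a_{n,k}(x)}}{1-e^{-a_{n,k}(x)}},\qquad a_{n,k}(x):=2\pi^2\sigma_{n,k}^2(x)/h_{n,k}^2.
    \end{equation*}

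It then remains to show $a_{n,k}(x)\geq Mn^{2\delta}$ uniformly in $1\leq k\leq n$ and $|x|\leq r$. The bound on $|\partial_x \driftX|$ from (C3) together with $n>(1+2K_r)\eta_2$ forces $|1-\alpha_{n,k}^{1/2}(x)|<1/4$, so $\alpha_{n,k}(x)>1/2$; substituting this together with $w_{n,k}\leq 2\gamma^{-1}$ and $\Delta t_{n,k}\leq\eta_2/n$ into the definitions \eqref{defn:mu_and_sigma} and \eqref{eqn:h} gives, for $k<n$,
    \begin{equation*}
        a_{n,k}(x)=\frac{2\pi^2\alpha_{n,k}(x)}{w_{n,k}^2(\Delta t_{n,k})^{2\delta}}\geq\frac{\pi^2\gamma^2 n^{2\delta}}{4\eta_2^{2\delta}}=Mn^{2\delta}.
    \end{equation*}
The same computation at $k=n$ yields $a_{n,n}(x)\geq \pi^2\gamma^2n/(4\eta_2)$, which still dominates $Mn^{2\delta}$ thanks to $\delta\leq 1/2$ and $\eta_2\geq 1$ (both ensure $(n/\eta_2)^1\geq(n/\eta_2)^{2\delta}$). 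Since $n^{2\delta}\geq 1$ implies $e^{-a_{n,k}(x)}\leq e^{-M}$, the denominator $1-e^{-a_{n,k}(x)}$ is at least $1-e^{-M}$, and the bound $|C_{n,k}(x)-1|\leq c_0 e^{-Mn^{2\delta}}$ follows.

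I do not anticipate any real obstacle; the proof is essentially Poisson summation plus careful constant tracking. The only delicate point is the terminal step $k=n$, whose lattice span scales like $\Delta t_{n,n}$ rather than $(\Delta t_{n,n})^{1/2+\delta}$: this is precisely where the restriction $\delta\leq 1/2$ built into \eqref{eqn:h} is needed so that the uniform lower bound on $a_{n,k}(x)$ survives.
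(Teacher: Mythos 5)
Your proposal is correct and follows essentially the same route as the paper's proof: Poisson summation applied to the Gaussian density, isolation of the zero-frequency term, the bound $m^2\geq m$ plus a geometric series, and then the uniform lower bound $2\pi^2\sigma_{n,k}^2(x)/h_{n,k}^2\geq Mn^{2\delta}$ obtained from $\alpha_{n,k}(x)\geq 1/2$ and $w_{n,k}\leq 2\gamma^{-1}$, with the terminal step $k=n$ handled exactly as in the paper via $(n/\eta_2)\wedge(n/\eta_2)^{2\delta}=(n/\eta_2)^{2\delta}$. No gaps.
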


\begin{proof}[Proof of Lemma \ref{lemma:C_sde}]
For brevity, let $\mu := \driftGauss_{n,k}(x)$ and $\sigma^2  := \varianceGauss_{n,k}(x)$. Set 
    \begin{equation}\label{eqn:fourier_transform}
    \widehat{\varphi}_{\sigma}(s) := \int_{-\infty}^{+\infty}\varphi(x\,|\,0,\sigma^2)e^{-2\pi i s x}\,dx = e^{-2\pi^2 s^2\sigma^2} ,\quad s\in \mathbb{R}.
    \end{equation}
Since both $\varphi(\,\cdot\,\,|\, 0,\sigma^2)$ and $\widehat{\varphi}_{\sigma}(\,\cdot\,)$ decay at infinity faster than any power function, we can apply the Poisson summation formula (see, e.g.,~p.\,252 of \cite{Stein1971}) to obtain
        \begin{align*}
                \C(x) &= \sum_{j\in\mathbb{Z}}\varphi(jh_{n,k}+g_k^+\,|\,x +\mu,\sigma^2)h_{n,k} \\
                    &=1 + \sum_{\ell\in\mathbb{Z}\setminus \{0\}} \widehat{\varphi}_{\sigma}(\ell/h_{n,k}) e^{-2\pi i  (x + \mu -g_k^+)\ell/h_{n,k}}\\
                    &=  1 + 2 \sum_{\ell=1}^{\infty} \widehat{\varphi}_{\sigma}(\ell/h_{n,k}) \cos(2\pi( x+\mu-g_k^+) \ell/h_{n,k}),\quad x \in E_{n,k-1},
        \end{align*}
since $\widehat{\varphi}_{\sigma }(0) = 1$. It follows that
            \begin{equation*}
                \abs{\C(x) - 1} \leq  2\sum_{\ell=1}^{\infty}\lvert \widehat{\varphi}_{\sigma}(\ell/h_{n,k})\rvert.
            \end{equation*}
        Using \eqref{eqn:fourier_transform} and the elementary inequality $e^{-a\ell^2} \leq e^{-a\ell}$, $\ell\geq 1$, $a>0$, we obtain
            \begin{equation}\label{ineq:c1}
                    \abs{\C(x) - 1}\leq 2\sum_{\ell=1}^{\infty} e^{-2\pi^2 \ell \sigma^2 /h_{n,k}^2}=  \frac{2e^{-2\pi^2  \sigma^2 /h_{n,k}^2}}{1-e^{-2\pi^2\sigma^2/h_{n,k}^2}}.
            \end{equation}
        By condition (C3), for $n > 2\maxTimeSpan K_r$ we have
            \begin{align}\label{ineq:lowerBoundZetaVar}
                \inf_{\lvert x\rvert \leq r}\varianceZeta_{n,k}(x) &= \inf_{\lvert x\rvert \leq r}(1+ \half\Delta t_{n,k} \partial_x \driftX(t_{n,k-1},x))^2 \nonumber\\
                &\geq 1- \Delta t_{n,k}\sup_{\lvert x\rvert \leq r}\lvert \partial_x \driftX(t_{n,k-1},x) \rvert \geq 1 - \maxTimeSpan K_r/n \geq \half.
            \end{align}
        Substituting $\sigma^2 =\stddevGauss_{n,k}^2(x) = \varianceZeta_{n,k}(x)\Delta t_{n,k} \geq \Delta t_{n,k} /2$ into relation \eqref{ineq:c1} and using the inequality 
        \begin{equation}\label{ineq:deltaDivhSq}
            \min_{1\leq k\leq n}\frac{\Delta t_{n,k}}{h_{n,k}^{2}} =  \frac{1}{\scale_{n,n}^2\Delta t_{n,n}} \wedge \min_{1\leq k\leq n-1}\frac{1}{\scale_{n,k}^2(\Delta t_{n,k})^{2\delta}}\geq \frac{\gamma^2}{4}\left( \frac{n}{\maxTimeSpan} \wedge \bigg(\frac{n}{\maxTimeSpan}\bigg)^{2\delta}\right)  = \frac{\gamma^2n^{2\delta}}{4\maxTimeSpan^{2\delta}},
        \end{equation}
that holds since $\max_{1\leq k\leq n}\lvert \scale_{n,k}\rvert \leq 2\gamma^{-1}$, $\delta < 1/2$ we obtain
            \begin{equation*}
                \adjustlimits\max_{1\leq k \leq n }\sup_{\lvert x\rvert\leq r}\abs{\C(x) - 1}\leq \frac{2e^{-Mn^{2\delta}}}{1 - e^{-Mn^{2\delta}}}\leq c_0 e^{-Mn^{2\delta}}.
            \end{equation*}
\end{proof}

\begin{lemma}\label{lemma:moment_gauss_sde}
Let $Z$ be a standard normal random variable independent of $\{\xi_{n,k}\}_{k=1}^n$. Set
\begin{equation}\label{defn:dtaylor}
        \dtaylor(x):=  \driftGauss_{n,k}(x) + \stddevGauss_{n,k}(x)Z,\quad k=1,\ldots,n,\quad  x \in \mathbb{R},
\end{equation}
and let $\Delta\xi_{n,k}:=\xi_{n,k}-\xi_{n,k-1}$. Denote by $\e_{k,x}$ the conditional expectation given $\xi_{n,k-1}=x$, put $E_{n,k}(r) := E_{n,k-1}\cap [-r,r]$, $r >0$, and set, for $r>\lvert x_0\rvert $,  
    \begin{equation}\label{defn:e}
        e_{n,k}(m,r) := \sup_{x \in E_{n,k}(r)}\lvert \e_{k,x}(\Delta \xi_{n,k})^m - \e\dtaylor^m(x)\rvert ,\quad m= 1,2,\ldots 
    \end{equation}
Then
    \begin{equation*}
        \max_{1\leq k\leq n} e_{n,k}(m,r) \leq c_me^{-Mn^{2\delta}} \text{ for }n > (1+2 K_r)\maxTimeSpan \vee M^{-1/(2\delta)},
    \end{equation*}
where $c_m\in (0,\infty)$ is a constant whose explicit value is given below in \eqref{eqn:c_mr}.
\end{lemma}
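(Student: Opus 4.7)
My plan is to adapt the Poisson summation argument of the proof of Lemma~\ref{lemma:C_sde} to moment-weighted Gaussian densities. Fix $k$ and $x \in E_{n,k}(r)$, and abbreviate $\mu := \driftGauss_{n,k}(x)$, $\sigma^2 := \varianceGauss_{n,k}(x)$, $h := h_{n,k}$. For $m \geq 1$ set
\[
F_m(y) := (y-x)^m \varphi(y \,|\, x + \mu, \sigma^2), \qquad y \in \mathbb{R}.
\]
Then $\e \dtaylor^m(x) = \widehat{F}_m(0)$ and
\[
\e_{k,x}(\Delta\xi_{n,k})^m = \frac{h}{\C(x)}\sum_{j \in \mathbb{Z}} F_m(g_k^+ + jh).
\]
Applying the Poisson summation formula to the right-hand side and substituting $\widehat{F}_m(0) = \e \dtaylor^m(x)$ yields the decomposition
\[
\e_{k,x}(\Delta\xi_{n,k})^m - \e \dtaylor^m(x) = \e \dtaylor^m(x) \cdot \frac{1 - \C(x)}{\C(x)} + \frac{1}{\C(x)}\sum_{\ell \neq 0} \widehat{F}_m(\ell/h)\, e^{2\pi i g_k^+ \ell/h}.
\]
The first term is controlled by Lemma~\ref{lemma:C_sde} combined with uniform boundedness of $\e \dtaylor^m(x)$ on $|x| \leq r$, which follows from the bounds on $\mu$ and $\sigma^2$ furnished by Lemma~\ref{lemma:driftdiffusionbound}.

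To control the second term, I would compute $\widehat{F}_m$ explicitly. After the substitution $z = y - x - \mu$ the task reduces to the Fourier transform of $z \mapsto (z+\mu)^m \varphi(z\,|\,0,\sigma^2)$. Expanding $(z+\mu)^m$ by the binomial theorem and using the identity
\[
\int e^{-2\pi i s z} z^j \varphi(z \,|\, 0, \sigma^2)\, dz = \frac{1}{(-2\pi i)^j}\frac{d^j}{ds^j} e^{-2\pi^2 s^2 \sigma^2}
\]
one arrives at $\widehat{F}_m(s) = e^{-2\pi i s(x+\mu)}\, P_m(s;\mu,\sigma)\, e^{-2\pi^2 s^2 \sigma^2}$, where $P_m$ is a polynomial in $s$ of degree at most $m$ whose coefficients are polynomial expressions in $\mu$ and $\sigma$. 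Lemma~\ref{lemma:driftdiffusionbound} bounds $|\mu|$ and $\sigma$ uniformly in $k \leq n$ and $|x| \leq r$, so the coefficients of $P_m$ admit a uniform bound depending only on $r$, $m$, $\maxTimeSpan$ and $K_r$.

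Finally I would estimate $\sum_{\ell \neq 0}|\widehat{F}_m(\ell/h)|$ by pairing positive and negative indices, using the crude bound $|P_m(\ell/h;\mu,\sigma)| \leq C_m(1 + (\ell/h)^m)$, and splitting the Gaussian factor $e^{-2\pi^2 \ell^2 \sigma^2/h^2}$ as a product of two copies of $e^{-\pi^2 \ell^2 \sigma^2/h^2}$. One copy absorbs the polynomial growth $(\ell/h)^m$ (which is polynomial in $n$) into a bounded constant provided $n$ is large enough---this is exactly the role of the condition $n > M^{-1/(2\delta)}$ in the lemma statement. The other copy, combined with the inequality $\ell^2 \geq \ell$ for $\ell \geq 1$, produces a convergent geometric series whose leading term delivers the desired $e^{-Mn^{2\delta}}$ bound via $2\pi^2 \sigma^2/h^2 \geq Mn^{2\delta}$ from \eqref{ineq:lowerBoundZetaVar} and \eqref{ineq:deltaDivhSq}. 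Combining with the bound on the first term yields the claim. The only genuinely technical step is keeping track of the polynomial $P_m$ in enough detail to pin down the constant $c_m$; the exponential-decay estimate itself then follows by the same routine geometric-series manipulation that underlies Lemma~\ref{lemma:C_sde}.
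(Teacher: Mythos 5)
Your proposal follows essentially the same route as the paper's proof: the identical decomposition of the error into a normalisation term $\e\dtaylor^m(x)\,\frac{1-\C(x)}{\C(x)}$ (handled via Lemma~\ref{lemma:C_sde}) plus an aliasing term, Poisson summation applied to the moment-weighted Gaussian, the explicit Fourier transform expressed through Hermite polynomials, and a geometric-series estimate whose validity is exactly what the condition $n>M^{-1/(2\delta)}$ guarantees. The only difference is bookkeeping in the final estimate: the paper exploits the $\Delta t_{n,k}$ factors hidden in $\mu$ and $\sigma^2$ to cancel the $h_{n,k}^{-l}$ growth directly (via $\Delta t_{n,k}/h_{n,k}\leq 2\maxTimeSpan/\minTimeSpan$), so no splitting of the Gaussian factor is needed and the full exponent $Mn^{2\delta}$ is retained, whereas your even split into two copies of $e^{-\pi^2\ell^2\sigma^2/h^2}$ would deliver only $e^{-Mn^{2\delta}/2}$ --- a harmless loss affecting just the constant in the exponent.
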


\begin{proof}[Proof of Lemma \ref{lemma:moment_gauss_sde}]
        For brevity, we will often supress dependence on $m$, $n$, $k$ and $x$. This should cause no confusion. Set $C = \C(x)$ and 
        \begin{equation*}
            \momentdxi := \e_{k,x}(\dxi_{n,k})^m,\quad \momentdtaylor:= \e \dtaylor^m(x),\quad m= 1,2,\ldots
        \end{equation*}
        Using the triangle inequality, 
        \begin{equation}\label{eqn:1}
                  \lvert \momentdxi -  \momentdtaylor \rvert = \lvert \mbox{$\frac{1-C}{C}$}\momentdtaylor  +\mbox{$\frac{1}{C}$}(C \momentdxi - \momentdtaylor)  \rvert 
                    \leq \mbox{$\lvert \frac{ C - 1  }{C} \rvert\lvert$} \momentdtaylor\rvert  + \mbox{$\frac{1}{C}$} \lvert C\momentdxi - \momentdtaylor \rvert  .
            \end{equation}
        The term $C\momentdxi = C \e_{k,x} (\dxi_{n,k})^m$ can be viewed as a trapezoidal approximation of $\momentdtaylor=\e \dtaylor^m(x)$, so after rewriting $\momentdtaylor$ as an integral, we can express $C\momentdxi - \momentdtaylor$ as the quadrature error
            \begin{align*}
                 \varepsilon_{k}(x) :=  C\momentdxi - \momentdtaylor
                 =&\sum_{j \in \mathbb{Z}} (jh_{n,k} + g_k^+ -  x)^m\varphi(jh_{n,k} + g_k^+- x\,|\, \mu, \sigma^2) h_{n,k}  \nonumber\\
                    &\qquad  -\int_{-\infty}^{+\infty}(u + g_k^+-x)^m\varphi(u+g_k^+-x\,|\,\mu,\sigma^2)\,du,
            \end{align*}
        where $\mu = \driftGauss_{n,k}(x)$ and $\sigma^2 = \stddevGauss_{n,k}^2(x)$. We further note that, due to \eqref{defn:sigma}, the condition $n \geq 2\maxTimeSpan K_r$ ensures that $\sigma^2 >0$. Set 
    \begin{align}\label{eqn:fourier_transform_2}
    \FourierMoment(s) &:= \int_{-\infty}^{+\infty}(v + \mu)^m\varphi(v\,|\,0,\sigma^2)e^{-2\pi i s v}\,dv \nonumber\\
    &= \sum_{l=0}^m \binom{m}{l}\mu^{m-l}\int_{-\infty}^{+\infty}v^l\varphi(v\,|\, 0 , \sigma^2)e^{-2\pi i s v}\,dv\nonumber \\
    &= \sum_{l=0}^m \binom{m}{l}\mu^{m-l} (-i\sigma)^lH_l(2\pi s  \sigma)e^{-2\pi^2 s^2 \sigma^2},\quad s\in \mathbb{R},
    \end{align}
where $H_l(x) :=(-1)^le^{x^2/2}\frac{d^l}{dx^l}e^{-x^2/2}$, $x\in \mathbb{R}$, $l \geq 1$, is the $l$-th Chebyshev--Hermite polynomial. Since both $(\,\cdot\,-x)^m \varphi(\,\cdot\, - x\,|\,\mu,\sigma^2)$ and $\FourierMoment(\,\cdot\,)$ decay at infinity faster than any power function, using the Poisson summation formula (see, e.g.,~p.\,252 of~\cite{Stein1971}) and the change of variables $v = u h_{n,k} + g_k^+ -x $ in \eqref{eqn:fourier_transform_2}, we obtain
            \begin{align*}
                 \varepsilon_{k}(x) &= \sum_{\ell\in\mathbb{Z}\backslash \{0\}} \FourierMoment(\ell/h_{n,k})e^{-2\pi i(-g_k^ + + x + \mu)\ell/h_{n,k}}\\
                 &= 2\sum_{\ell= 1}^{\infty}
                 \Re\bigl[\FourierMoment(\ell/h_{n,k})e^{-2\pi i(-g_k^+ + x +\mu)\ell/h_{n,k}}\bigr].
            \end{align*}
        Since $\lvert \Re z \rvert\leq \lvert z\rvert$, $z \in \mathbb{C}$, and $\lvert e^{is}\rvert\leq 1$, $s \in \mathbb{R}$, we obtain
            \begin{equation*}
                \lvert \varepsilon_{k}(x)\rvert \leq
                    2\sum_{\ell= 1}^{\infty}\lvert\FourierMoment(\ell/h_{n,k})\rvert.
            \end{equation*}
        Note that $\lvert H_l(u) \rvert \leq C_l'(\lvert u\rvert^{\ell} + 1)$, $u \in \mathbb{R}$, $l =1,2,\ldots,$ where $C_l'$ is a constant that depends on $l$ only and we can assume without loss of generality that $\{C_l'\}_{l\geq 1}$ is a non-decreasing sequence. Therefore one has $\lvert (-i \sigma )^l H_l(2 \pi s \sigma )\rvert \leq C_l( s^l \sigma^{2l} +\sigma^l)$, $s \geq 1$, where $C_l:= (2\pi)^lC_l'$. Using $\mu^{m-l}(\ell\sigma^2/h_{n,k})^l= \beta_{n,k}^{m-l}(x)\alpha_{n,k}^l(x)\ell^m \Delta t_{n,k}^m/(h_{n,k}^l\ell^{m-l})$, and the fact that $h_{n,k}^{-l}\ell^{-(m-l)}\leq h_{n,k}^{-m}$ for $l\leq m$ as $h_{n,k}<1$, we obtain from \eqref{eqn:fourier_transform_2}
            \begin{align*}
              \lvert \varepsilon_{k}(x) \rvert  &\leq 2\sum_{\ell =1}^{\infty}\sum_{l=0}^m \binom{m}{l} C_l \lvert \mu\rvert ^{m-l} \bigg( \bigg(\frac{\ell \sigma^2}{h_{n,k}}\bigg)^l + \sigma^l \bigg)e^{-2\pi^2 \sigma^2 \ell^2/h_{n,k}^2} \\
                &\leq  2 \sum_{\ell=1}^{\infty}\sum_{l=0}^m \binom{m}{l}C_l\bigg( \bigg(\frac{\Delta t_{n,k}}{h_{n,k}}\bigg)^m \lvert \driftZeta_{n,k}^{m-l}(x)\rvert \varianceZeta_{n,k}^{l}(x) \ell^m \\
                &\qquad\qquad\qquad\qquad \qquad  +\lvert \driftZeta_{n,k}^{m-l}(x)\rvert \varianceZeta_{n,k}^{l/2}(x)(\Delta t_{n,k})^{m-l/2}\bigg)e^{-2\pi^2 \sigma^2 \ell^2 /h_{n,k}^2}.
             \end{align*}
        Since $C_l$ is non-decreasing in $l$, one has 
            \begin{equation*}
            \sup_{x \in E_{n,k}(r)}\sum_{l=0}^m\binom{m}{l}C_l\bigg(\frac{\Delta t_{n,k}}{h_{n,k}}\bigg)^m \lvert \beta_{n,k}^{m-l}(x)\rvert \alpha_{n,k}^l(x)\leq  2^mC_m\bigg(\frac{2K_r'\maxTimeSpan}{\minTimeSpan}\bigg)^m =:L_{m,r},
            \end{equation*}
        where we used Lemma \ref{lemma:driftdiffusionbound} and the inequality $\max_{1\leq k\leq n}\Delta t_{n,k}/h_{n,k} \leq  2\maxTimeSpan/\minTimeSpan$, which follows from $\min_{1\leq k\leq n}h_{n,k}\geq \min\{ (\minTimeSpan/n)^{1/2 + \delta},\minTimeSpan/n\}/2=  \minTimeSpan/(2n)$. Again using Lemma \ref{lemma:driftdiffusionbound} and the trivial bound $(\Delta t_{n,k})^{m-l/2}\leq 1$, $0\leq l\leq m$, we have
            \begin{equation*} \sup_{x \in E_{n,k}(r)}\sum_{l=0}^{m}\binom{m}{l}C_l \lvert \beta_{n,k}^{m-l}(x)\rvert \alpha_{n,k}^{l/2}(x)(\Delta t_{n,k})^{m-l/2}\leq 2^m C_m(K_r')^m \leq L_{m,r}.
            \end{equation*}
        Hence 
            \begin{align*}
               \sup_{x \in E_{n,k}(r)}\lvert\varepsilon_k(x) \rvert &\leq 2L_{m,r}\sum_{\ell =1}^{\infty} (\ell^m + 1)e^{-2\pi^2\sigma^2 \ell^2/h_{n,k}^2} \leq 4L_{m,r}\sum_{\ell =1}^{\infty} \ell^m e^{-\ell M n^{2\delta}}, 
            \end{align*}
        where we used \eqref{ineq:lowerBoundZetaVar},  \eqref{ineq:deltaDivhSq} and the bound $e^{-a\ell^2} \leq e^{-a\ell}$, $\ell \geq 1$, $a>0$, in the second inequality. Note that   
            \begin{equation*}
         \sum_{\ell=1}^{\infty}\ell^mz^{\ell}\leq a_mz,\quad z \in [0,e^{-1}],
            \end{equation*}
        where $a_m:= \sum_{\ell =1}^{\infty}\ell^m e^{-\ell +1} <\infty$. As $Mn^{2\delta}>1$, we obtain from here that
            \begin{equation}\label{ineq:quadrature_moments}
                \adjustlimits\max_{1\leq  k \leq n }\sup_{x \in E_{n,k}(r)}\lvert \varepsilon_{k}(x)\rvert\leq 4a_mL_{m,r} e^{-Mn^{2\delta}}.
            \end{equation}
From Lemma \ref{lemma:C_sde},
            \begin{equation}\label{ineq:final}
                \sup_{x \in E_{n,k}(r)}\left|\frac{ \C(x) -1}{\C(x)} \right| \leq  \frac{c_0e^{-Mn^{2\delta}}}{1-c_0}.
            \end{equation}
        Using inequalities \eqref{ineq:quadrature_moments} and \eqref{ineq:final} in \eqref{eqn:1}, we get
            \begin{equation*}
               \adjustlimits\max_{1\leq k\leq n}\sup_{x \in E_{n,k}(r)}\lvert e_{n,k}(m,r) \rvert  \leq c_me^{-Mn^{2\delta}},
            \end{equation*}
        where
            \begin{equation}\label{eqn:c_mr}
                c_m := \frac{c_0}{1-c_0}\adjustlimits\max_{1\leq k\leq n}\sup_{x \in E_{n,k}(r)}\lvert\e\dtaylor^m(x)\rvert + \frac{4a_mL_{m,r}}{1-c_0}.
            \end{equation} 
        The boundedness of $\max_{1\leq k\leq n}\sup_{x \in E_{n,k}(r)}\lvert \e \dtaylor^m(x)\rvert$ can be proved by applying the inequality $\lvert x +y \rvert^m \leq 2^{m-1}(\lvert x \rvert^m + \lvert y\rvert^m)$, $x,y \in\mathbb{R}$, $m\geq 1$, to obtain
            \begin{align*}
                \sup_{x \in E_{n,k}(r)}\lvert\e \dtaylor^m(x)\rvert &\leq 2^{m-1} \sup_{\lvert x\rvert  \leq r}(\lvert \driftZeta_{n,k}(x)\Delta t_{n,k}\rvert^m + \lvert \varianceZeta_{n,k}(x)\Delta t_{n,k}\rvert^{m/2} \e \lvert Z\rvert^m ) \\
                    &\leq 2^{m-1}( (\maxTimeSpan K_r' )^m + (2\maxTimeSpan K_{r}')^{m/2} \pi^{-1/2}\mbox{$\Gamma(\frac{m+1}{2}$}) ) < \infty,
            \end{align*}
        where $\Gamma$ is the gamma function.
    \end{proof}
To prove the convergence stated in Theorem 1 we will use the martingale characterisation method, verifying the sufficient conditions for convergence from Theorem~4.1 in Ch.\,7 of~\cite{ethier09} (to be referred to as the EK theorem in what follows). For $x \in E_{n,k-1}$, let
\begin{equation*}
    \begin{aligned}
    b_{n,k}(x) &:= \mbox{$\frac{1}{\Delta t_{n,k}}$}\e[\Delta\xi_{n,k}\,|\, \xi_{n,k-1} =x],\\
    \varianceXi_{n,k}(x) &:= \mbox{$\frac{1}{\Delta t_{n,k}}$}\var[\Delta\xi_{n,k}\,|\, \xi_{n,k-1} =x].
    \end{aligned}
\end{equation*}
Using the standard semimartingale decomposition of $X_n$ (see \eqref{defn:xl}), we set 
    \begin{equation*}
        \begin{aligned}
        B_n(t) &:=   \mbox{$\sum_{k=1}^{\nu_n(t)}$}b_{n,k}(\xi_{n,k-1})\Delta t_{n,k},\\
        A_n(t) &:= \mbox{$\sum_{k=1}^{\nu_n(t)}$}\varianceXi_{n,k}(\xi_{n,k-1})\Delta t_{n,k},
        \end{aligned}
    \end{equation*}
and let $M_n:= X_n - B_n$. With respect to the natural filtration
\begin{equation*}
    \mathbf{F}^n:= \{ \sigma(X_n(s),B_n(s),A_n(s) : s\leq t) : t\geq 0\} = \{\sigma(\xi_{n,k} : k \leq \nu_n(t),t\geq 0 )\},    
\end{equation*}
our $B_n$, $A_n$ and $M_n$ are the predictable drift, angle bracket and martingale component, respectively, of the process $\Xl$. 

The EK theorem is stated for time-homogeneous processes. To use it in our case, we consider the vector-valued processes $\bm{X}_n(t) := (t,X_n(t))$ and, for a fixed $r>0$, let $\tau_n^r$ be localising $\mathbf{F}^n$-stopping times:
    \begin{equation*}
        \tau_n^r := \inf\{t : \norms{\bm{X}_n(t)} \vee \norms{\bm{X}_n(t-) }\geq r \},
    \end{equation*}
$\norms{\bm{u}} = \lvert u_1\rvert \vee \lvert u_2 \rvert$ being the maximum norm of $\bm{u} =(u_1,u_2)$. 

\begin{lemma}\label{characteristics_converge}
For each fixed $r>\lvert x_0\rvert $, as $n\to\infty$,
    \begin{equation*}
        \sup_{t \leq 1 \wedge \tau_n^r }\Big\lvert  B_n(t) - \int_0^t \driftX(s,X_n(s))\,ds\Big\rvert \xrightarrow{\as} 0,\quad
        \sup_{t \leq 1  \wedge \tau_n^r}\lvert  A_n(t) - t\rvert \xrightarrow{\as} 0.
    \end{equation*}

\end{lemma}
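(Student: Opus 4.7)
The strategy is to split each characteristic error into a ``moment-matching'' part controlled by Lemma~\ref{lemma:moment_gauss_sde} and a deterministic discretisation part handled by a Taylor expansion of $\driftX$. On $\{t \leq 1\wedge\tau_n^r\}$, the stopping rule forces $|\xi_{n,k-1}|<r$ for every $k\leq \nu_n(t)$, so Lemmas~\ref{lemma:driftdiffusionbound} and~\ref{lemma:moment_gauss_sde} apply term by term with constants depending only on $r$. All the bounds below are deterministic on this event, so almost sure convergence follows at once.

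For the drift, decompose
\begin{equation*}
B_n(t) - \int_0^t\driftX(s,X_n(s))\,ds = \Sigma_n^{(1)}(t) + \Sigma_n^{(2)}(t) - R_n(t),
\end{equation*}
where $\Sigma_n^{(1)}(t) := \sum_{k=1}^{\nu_n(t)}(b_{n,k} - \driftZeta_{n,k})(\xi_{n,k-1})\Delta t_{n,k}$, $\Sigma_n^{(2)}(t) := \sum_{k=1}^{\nu_n(t)}\bigl[\driftZeta_{n,k}(\xi_{n,k-1})\Delta t_{n,k} - \int_{t_{n,k-1}}^{t_{n,k}}\driftX(s,\xi_{n,k-1})\,ds\bigr]$, and $R_n(t) := \int_{t_{n,\nu_n(t)}}^{t}\driftX(s,\xi_{n,\nu_n(t)})\,ds$. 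Since $b_{n,k}(x)\Delta t_{n,k} = \e_{k,x}\dxi_{n,k}$ and $\driftZeta_{n,k}(x)\Delta t_{n,k} = \e\dtaylor(x)$, Lemma~\ref{lemma:moment_gauss_sde} with $m=1$ bounds $|\Sigma_n^{(1)}(t)|$ by $nc_1 e^{-Mn^{2\delta}}\to 0$. For $\Sigma_n^{(2)}(t)$, a Taylor expansion in $s$ around $t_{n,k-1}$ using (C3) gives $\int_{t_{n,k-1}}^{t_{n,k}}\driftX(s,\xi_{n,k-1})\,ds = \driftX\Delta t_{n,k} + \tfrac12\partial_t\driftX(\Delta t_{n,k})^2 + O((\Delta t_{n,k})^3)$, while by~\eqref{defn:b}, $\driftZeta_{n,k}(\xi_{n,k-1})\Delta t_{n,k} = \driftX\Delta t_{n,k} + \tfrac12(\Delta t_{n,k})^2[\partial_t\driftX + \driftX\partial_x\driftX + \tfrac12\partial_{xx}\driftX]$, all evaluated at $(t_{n,k-1},\xi_{n,k-1})$. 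The mismatch is $O((\Delta t_{n,k})^2)$ uniformly on $|\xi_{n,k-1}|\leq r$, so $|\Sigma_n^{(2)}(t)| = O(1/n)$. Finally, $|R_n(t)| \leq K_r\maxTimeSpan/n$.

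The variance part follows the same pattern. Writing the conditional variance as the difference of the second moment and the square of the first moment and combining Lemma~\ref{lemma:moment_gauss_sde} for $m=1,2$ with the $O(1/n)$ bound on first moments from Lemma~\ref{lemma:driftdiffusionbound} gives $|(a_{n,k}-\varianceZeta_{n,k})(\xi_{n,k-1})|\Delta t_{n,k} \leq c'e^{-Mn^{2\delta}}$, whose sum over $k$ still tends to $0$. Squaring~\eqref{defn:sigma} yields $\varianceZeta_{n,k}(x) = 1 + \Delta t_{n,k}\partial_x\driftX(t_{n,k-1},x) + O((\Delta t_{n,k})^2)$, whence $\sum_{k=1}^{\nu_n(t)}\varianceZeta_{n,k}(\xi_{n,k-1})\Delta t_{n,k} = t_{n,\nu_n(t)} + O(1/n)$, and $|t - t_{n,\nu_n(t)}|\leq \maxTimeSpan/n$ closes the gap. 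The only mildly delicate point is keeping the constants in the Taylor expansions uniform in $k$ on $\{|x|\leq r\}$, which is exactly what condition (C3) and Lemma~\ref{lemma:driftdiffusionbound} supply.
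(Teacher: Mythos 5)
Your proof is correct and follows essentially the same route as the paper: the same splitting into a moment-matching error controlled by Lemma~\ref{lemma:moment_gauss_sde} (with $m=1$ for the drift and $m=1,2$ for the variance via the difference-of-squares bound), a deterministic discretisation error controlled by (C3) and Lemma~\ref{lemma:driftdiffusionbound}, and an $O(1/n)$ end-of-interval term, the only cosmetic difference being that you merge the paper's two middle drift terms (``$\driftZeta_{n,k}$ versus $\driftX$ at the grid point'' and ``$\driftX$ at the grid point versus at $s$'') into a single Taylor comparison. One small caveat: your claimed expansion $\int_{t_{n,k-1}}^{t_{n,k}}\driftX(s,x)\,ds = \driftX\,\Delta t_{n,k} + \tfrac12\partial_t\driftX\,(\Delta t_{n,k})^2 + O((\Delta t_{n,k})^3)$ would require $\driftX(\,\cdot\,,x)$ to be $C^2$ in $t$, which (C3) does not provide, but the $O((\Delta t_{n,k})^2)$ mismatch you actually use already follows from the first-order bound $\lvert\partial_t\driftX\rvert\leq K_r$ together with the explicit form of $\driftZeta_{n,k}$ in \eqref{defn:b}, so nothing breaks.
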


\begin{proof}[Proof of Lemma \ref{characteristics_converge}]         
Since on each of the time intervals $[t_{n,k-1},t_{n,k})$, $k=1,\ldots,n,$ the process $X_n$ is equal to $\xi_{n,k-1}$, we have the following decomposition:
         \begin{align}\label{eqn:b_error}
                &B_n(t) - \mbox{$\int_0^t$} \driftX(s,X_n(s))\,ds \nonumber\\
                &= \mbox{$\sum_{k=1}^{\nu_n(t)}$}\bigl[ \driftspace_{n,k}(\xi_{n,k-1})\Delta t_{n,k} - \mbox{$\int_{t_{n,k-1}}^{t_{n,k}}$} \driftX(s,\Xl(s))\,ds\bigr] - \mbox{$\int_{t_{n,\nu_n(t)}}^{t}$}\driftX(s,\Xl(s))\,ds\nonumber\\
            &=\mbox{$\sum_{k=1}^{\nu_n(t)}$}( \driftXi_{n,k}(\xi_{n,k-1}) - \driftZeta_{n,k}(\xi_{n,k-1}))\Delta t_{n,k} \nonumber\\
                &\quad +\mbox{$\sum_{k=1}^{\nu_n(t)}$}( \driftZeta_{n,k}(\xi_{n,k-1}) - \driftX(t_{n,k-1},\xi_{n,k-1}) )\Delta t_{n,k} \nonumber\\
                    &\quad + \mbox{$\sum_{k=1}^{\nu_n(t)}$}\mbox{$\int_{t_{n,k-1}}^{t_{n,k}}$} [\driftX(t_{n,k-1},X_n(s))-\driftX(s,X_n(s)) ]\,ds\nonumber \\
                    &\quad - \mbox{$\int_{t_{n,\nu_n(t)}}^{t}$}\driftX(s,\xi_{n,\nu_n(t)})\,ds.
            \end{align}
Due to the stopping-time localisation, the first term on the right-hand side of \eqref{eqn:b_error} has the following upper bound (see \eqref{defn:mu_and_sigma}, \eqref{defn:dtaylor} and \eqref{defn:e}):
        \begin{align}\label{ineq:b1}
            &\sup_{t \leq 1 \wedge \tau_n^r}\Big\lvert  \mbox{$\sum_{k=1}^{\nu_n(t)}$}(\driftspace_{n,k}(\xi_{n,k-1}) - \driftZeta_{n,k}(\xi_{n,k-1}))\Delta t_{n,k}\Big\rvert \nonumber\\
            &= \sup_{t \leq 1\wedge \tau_n^r}\Big\lvert\mbox{$\sum_{k=1}^{\nu_n(t)}$} (\e [\Delta \xi_{n,k}\,|\, \xi_{n,k-1}] - \e [\dtaylor(\xi_{n,k-1})\,|\, \xi_{n,k-1}])\Big\rvert\nonumber\\
            &\leq \mbox{$\sum_{k=1}^n$}\sup_{x \in E_{n,k}(r)}\lvert \e_{k,x}\Delta \xi_{n,k}-\e  \dtaylor(x) \rvert \nonumber\\
            &\leq \mbox{$\sum_{k=1}^n$} e_{n,k}(1,r).
        \end{align}
To bound the second term on the right-hand side of \eqref{eqn:b_error}, we use the definition of $\beta_{n,k}$ in \eqref{defn:b} and
condition (C3) to get following inequality:
            \begin{align}\label{ineq:b2}
                \sup_{x \in E_{n,k}(r)}\lvert \driftZeta_{n,k}(x) - \driftX(t_{n,k-1},x) \rvert &\leq \half\Delta t_{n,k}\sup_{\lvert x\rvert \leq r }\lvert (\partial_t\driftX  + \driftX \partial_x \driftX+ \half \partial_{xx} \driftX)(t_{n,k-1},x)\rvert \nonumber\\
                    &\leq \half \Delta t_{n,k}K_r(K_r + \mbox{$\frac{3}{2}$}) .
            \end{align}
    The second last term in \eqref{eqn:b_error} can be bounded from above by using the bound for $\partial_t \driftX$ from (C3):
     \begin{equation}\label{ineq:b3}
                \sup_{t \leq 1\wedge \tau_n^r}\sum_{k=1}^{\nu_n(t)}\left\lvert \mbox{$\int_{t_{n,k-1}}^{t_{n,k}}$}[\driftX(t_{n,k-1},X_n(s))-\driftX(s,X_n(s)) ]\,ds\right\rvert
                    \leq K_r\max_{1\leq k \leq n}\Delta t_{n,k}.
            \end{equation}
        Again using (C3), the last term in \eqref{eqn:b_error} is bounded as follows: 
            \begin{equation}\label{ineq:b4}
                \sup_{t\leq 1 \wedge \tau_n^r}\mbox{$\left\lvert \int_{t_{n,\nu_n(t)}}^t \driftX(s,X_{n,\nu_n(t)})\,ds\right\rvert $}\leq K_r\max_{1\leq k \leq n}\Delta t_{n,k}.
            \end{equation}
          Using inequalities \eqref{ineq:b1}--\eqref{ineq:b4} in the decomposition \eqref{eqn:b_error}, we obtain
            \begin{align*}
                 \sup_{t \leq 1 \wedge \tau_n^r}\left\lvert B_n(t) - \mbox{$\int_0^t$} \driftX(s,X_n(s))\,ds \right\rvert \leq \sum_{k=1}^n e_{n,k}(1,r) + \mbox{$K_r(\half K_r +\frac{11}{4})$}\max_{1 \leq k\leq n}\Delta t_{n,k}.
            \end{align*}
        Since $\max_{1\leq k \leq n}\Delta t_{n,k}\leq \maxTimeSpan/n\to 0$ as $n\to \infty$, the first half of the lemma follows after applying Lemma \ref{lemma:moment_gauss_sde} with $m=1$.
        
Similarly,
    \begin{align}\label{eqn:quadvar}
                A_n(t) - t &=
                \mbox{$\sum_{k=1}^{\nu_n(t)}$}( \varianceXi_{n,k}(\xi_{n,k-1}) - \varianceZeta_{n,k}(\xi_{n,k-1}))\Delta t_{n,k}\nonumber \\
                &\quad + \mbox{$\sum_{k=1}^{\nu_n(t)}$}(\varianceZeta_{n,k}(\xi_{n,k-1}) - 1) \Delta t_{n,k} - \mbox{$\int_{t_{n,\nu_n(t)}}^{t}$} \,ds.
    \end{align}    
To bound the first term on the right-hand side of \eqref{eqn:quadvar}, we use $\v [Y \,|\, X] = \e[Y^2\,|\, X] - (\e[Y\,|\, X])^2$ for a square-integrable random variable $Y$ to obtain
    \begin{align}\label{ineq:a1}
    &\sup_{t \leq 1 \wedge \tau_n^r}\Big\lvert \mbox{$\sum_{k=1}^{\nu_n(t)}$}( \varianceXi_{n,k}(\xi_{n,k-1}) - \varianceZeta_{n,k}(\xi_{n,k-1}))\Delta t_{n,k}\Big\rvert\nonumber\\
        &=\sup_{t \leq 1 \wedge \tau_n^r}\Big\lvert \mbox{$\sum_{k=1}^{\nu_n(t)}$}( \v [\Delta \xi_{n,k}\,|\, \xi_{n,k-1}] - \v[ \dtaylor( \xi_{n,k-1})\,|\xi_{n,k-1}]) \Big\rvert \nonumber\\
        &\leq \sup_{t \leq 1 \wedge \tau_n^r} \sum_{k=1}^{\nu_n(t)}\bigl\lvert  \e [(\Delta \xi_{n,k})^2\,|\, \xi_{n,k-1}] - \e [\dtaylor^2(\xi_{n,k-1})\,|\,\xi_{n,k-1}]\bigr\rvert   \nonumber\\ 
        &\qquad +\sup_{t \leq 1 \wedge \tau_n^r}\sum_{k=1}^{\nu_n(t)}\bigl\lvert (\e [\Delta \xi_{n,k}\,|\, \xi_{n,k-1}])^2 - (\e [\dtaylor( \xi_{n,k-1})\,|\,\xi_{n,k-1}])^2\bigr\rvert \nonumber\\
        &\leq \sum_{k=1}^{n} e_{n,k}(2,r) + 2\sum_{k=1}^ne_{n,k}(1,r)\Bigl(e_{n,k}(1,r) + \sup_{x \in E_{n,k}(r) }\lvert \e \dtaylor(x)\rvert\Bigr) ,
        \end{align}
 where we used the elementary bound
    \begin{equation}\label{ineq:squares}
        \lvert x^2 -y^2 \rvert \leq \lvert x - y\rvert (\lvert x-y\rvert + 2\lvert y\rvert)
    \end{equation}
 in the final inequality. Furthermore,
    \begin{equation*}
        \adjustlimits\max_{1\leq k \leq n}\sup_{x\in E_{n,k}(r)}\lvert \e \dtaylor(x)\rvert \leq \adjustlimits\max_{1\leq k \leq n}\sup_{\lvert x\rvert\leq r}\lvert \driftZeta_{n,k}(x)\rvert\Delta t_{n,k}\leq \maxTimeSpan K_r' .
    \end{equation*}
Lemma \ref{lemma:moment_gauss_sde} with $m=1$ and $m=2$ implies that the expression in the last line of \eqref{ineq:a1} vanishes as $n\to\infty$. The second term on the right-hand side of \eqref{eqn:quadvar} is bounded from above using \eqref{defn:sigma}, Lemma \ref{lemma:driftdiffusionbound} and condition (C3):
        \begin{align}\label{ineq:a2}
          \sup_{t\leq 1 \wedge \tau_n^r} \sum_{k=1}^{\nu_n(t)}\lvert \varianceZeta_{n,k}(\xi_{n,k-1}) - 1\rvert\Delta t_{n,k} &\leq  \adjustlimits\max_{1\leq k\leq n}\sup_{x\in E_{n,k}(r)}\lvert \stddevZeta_{n,k}(x) - 1\rvert (   \stddevZeta_{n,k}(x) +1)  \nonumber\\
                &\leq \mbox{$\frac{1}{2}$}\max_{1\leq k \leq n} \Delta t_{n,k}K_r  ( \sqrt{K_r'} +1).
        \end{align}
        For the last term in \eqref{eqn:quadvar} one has 
            \begin{align}\label{ineq:a3}
                \sup_{t\leq 1 \wedge \tau_n^r}\lvert \mbox{$\int_{t_{n,\nu_n}(t)}^t$} \,ds\rvert
                &\leq \max_{1\leq k \leq n}\Delta t_{n,k} .
            \end{align}
    Applying inequalities \eqref{ineq:a1}--\eqref{ineq:a3} to \eqref{eqn:quadvar}, the result follows.
\end{proof}

\begin{lemma}\label{tightness_SDE}
For each fixed $r>\lvert x_0\rvert $,
\begin{equation}\label{lemma:b_jump_conv}
                        \lim_{n\to\infty}\e \sup_{t\leq 1 \wedge \tau_n^r} \lvert B_n(t) - B_n(t-) \rvert^2 =0,
                    \end{equation}
    \begin{equation}\label{lemma:a_jump_conv}
                        \lim_{n\to\infty}\e \sup_{t\leq 1 \wedge \tau_n^r} \lvert A_n(t) - A_n(t-) \rvert =0,
                    \end{equation}
and                
\begin{equation}\label{lemma:x_jump_conv}                    \lim_{n\to\infty}\e \sup_{t\leq 1 \wedge \tau_n^r} \lvert X_n(t) - X_n(t-) \rvert^2 =0.
                    \end{equation}                
\end{lemma}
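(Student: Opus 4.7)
The plan is to treat the finite-variation processes $B_n$ and $A_n$ using deterministic pathwise bounds (their jump sizes depend only on $\xi_{n,k-1}$ and can be controlled by Lemmas \ref{lemma:driftdiffusionbound} and \ref{lemma:moment_gauss_sde}), and to handle the genuinely random jumps of $X_n$ via a fourth-moment bound obtained from Lemma \ref{lemma:moment_gauss_sde} with $m=4$. Throughout, the key fact is that for any $k$ with $t_{n,k-1} < \tau_n^r$ one has $\lvert\xi_{n,k-1}\rvert < r$, which in turn localises $\beta_{n,k}$ and $\alpha_{n,k}$ via Lemma \ref{lemma:driftdiffusionbound}.

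For \eqref{lemma:b_jump_conv}, $B_n$ is piecewise constant on each $[t_{n,k-1},t_{n,k})$, so its only jumps occur at the grid points, with size $b_{n,k}(\xi_{n,k-1})\Delta t_{n,k} = \e[\Delta\xi_{n,k}\,|\,\xi_{n,k-1}]$. Applying Lemma \ref{lemma:moment_gauss_sde} with $m=1$ together with the identity $\e\dtaylor(x) = \driftZeta_{n,k}(x)\Delta t_{n,k}$ and Lemma \ref{lemma:driftdiffusionbound}, on the event $\{t_{n,k-1} < \tau_n^r\}$ one has
\[
\lvert\e[\Delta\xi_{n,k}\,|\,\xi_{n,k-1}]\rvert \le K_r'\Delta t_{n,k} + e_{n,k}(1,r) \le K_r'\maxTimeSpan/n + c_1 e^{-Mn^{2\delta}}.
\]
This is a \emph{deterministic} $O(n^{-1})$ bound on every jump, so squaring and taking the supremum yields a deterministic $O(n^{-2})$ bound, whose expectation trivially vanishes. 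The argument for \eqref{lemma:a_jump_conv} is analogous: the jump of $A_n$ at $t_{n,k}$ equals $\v[\Delta\xi_{n,k}\,|\,\xi_{n,k-1}] \le \e[(\Delta\xi_{n,k})^2\,|\,\xi_{n,k-1}]$, which by Lemma \ref{lemma:moment_gauss_sde} with $m=2$ is bounded by $\e\dtaylor^2(\xi_{n,k-1}) + e_{n,k}(2,r) = O(\Delta t_{n,k})$ on the stopped event, giving a deterministic $O(n^{-1})$ bound on the supremum.

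The main obstacle is \eqref{lemma:x_jump_conv}, since $\Delta\xi_{n,k}$ is genuinely random and the naive second-moment bound $\e\sup_k(\Delta\xi_{n,k})^2 \le \sum_k\e(\Delta\xi_{n,k})^2\ind = O\bigl(\sum_k\Delta t_{n,k}\bigr) = O(1)$ is far too weak. Instead, I would combine $\max_k a_k^2 \le \bigl(\sum_k a_k^4\bigr)^{1/2}$ with Jensen's inequality applied to the concave square root to obtain
\[
\e\sup_{t\le 1\wedge\tau_n^r}\lvert X_n(t) - X_n(t-)\rvert^2 \le \Bigl(\sum_{k=1}^n\e\bigl[(\Delta\xi_{n,k})^4\ind\{t_{n,k-1}<\tau_n^r\}\bigr]\Bigr)^{1/2}.
\]
Conditioning on $\xi_{n,k-1}$ and invoking Lemma \ref{lemma:moment_gauss_sde} with $m=4$ together with Lemma \ref{lemma:driftdiffusionbound}, the conditional fourth moment on the stopped event is bounded by $\e\dtaylor^4(\xi_{n,k-1}) + e_{n,k}(4,r)$, and the dominant Gaussian term $3\varianceZeta_{n,k}^2(x)\Delta t_{n,k}^2$ is uniformly $O(n^{-2})$ for $\lvert x\rvert\le r$. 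Summing the at most $n$ relevant indices gives $O(n^{-1})$ inside the square root, for a final bound of $O(n^{-1/2}) \to 0$. The essential trick is the upgrade from second to fourth moments: passing from sum to square root costs a factor of $\sqrt n$, but the higher moment gains a factor of $\Delta t_{n,k} = O(n^{-1})$ per term, for a net decay of $n^{-1/2}$ — and this is precisely what Lemma \ref{lemma:moment_gauss_sde} was set up to furnish for arbitrary $m$.
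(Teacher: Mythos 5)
Your proposal is correct and follows essentially the same route as the paper: deterministic bounds on the jumps of $B_n$ and $A_n$ via Lemma \ref{lemma:moment_gauss_sde} (with $m=1,2$) and Lemma \ref{lemma:driftdiffusionbound}, and for $X_n$ the same second-to-fourth-moment upgrade $\e\max_k(\Delta\xi_{n,k})^2 \le \bigl(\sum_k \e(\Delta\xi_{n,k})^4\bigr)^{1/2}$ (the paper phrases it as Lyapunov's inequality followed by bounding the max by the sum) with Lemma \ref{lemma:moment_gauss_sde} at $m=4$. The only cosmetic difference is that the paper decomposes the conditional variance into second and squared first moments where you simply bound it by the second moment; both work.
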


\begin{proof}[Proof of Lemma \ref{tightness_SDE}]
Set $\overline{\xi}_n := \max_{1\leq k\leq n}\lvert \xi_{n,k}\rvert$ and
    \begin{equation*}
        \chi_n^r := \min\{k\leq n :\lvert \xi_{n,k}\rvert\geq r\}\bm{1}\{\overline{\xi}_n \geq r\} + n\bm{1}\{\overline{\xi}_n <r\}.
    \end{equation*}
The jumps of $B_n$ are given by the conditional means of the increments, so
    \begin{align*}
        \e \sup_{t\leq 1 \wedge \tau_n^r} \lvert B_n(t) - B_n(t-) \rvert^2  &= \e \max_{ k \leq \chi_n^r }(\e[\dxi_{n,k}\,|\,\xi_{n,k-1}])^2 \\
        &\leq  \e \adjustlimits\max_{1\leq k \leq n }\sup_{x \in E_{n,k}(r)}(\e_{k,x}\Delta \xi_{n,k})^2.
    \end{align*}
By the triangle inequality, for $x \in E_{n,k}(r)$,
\begin{align*}
        \lvert \e_{k,x} \Delta \xi_{n,k} \rvert
        &\leq \lvert \e_{k,x} \Delta \xi_{n,k} - \e \dtaylor(x)\rvert  + \lvert \e\dtaylor(x)\rvert\\
        &\leq e_{n,k}(1,r) + \lvert \driftZeta_{n,k}(x)\rvert\Delta t_{n,k}.
    \end{align*}
By inequality \eqref{eqn:d2} and Lemmata 1 and 3, we obtain \eqref{lemma:b_jump_conv}. The jumps of $A_n$ are given by the conditional variances of the increments, so
    \begin{align*}
        \e \sup_{t\leq 1 \wedge \tau_n^r} \lvert A_n(t) - A_n(t-) \rvert  
        &= \e \max_{k\leq \chi_n^r}\v[\dxi_{n,k}\,|\,\xi_{n,k-1}]\\
        &\leq \e  \adjustlimits\max_{1\leq k \leq n }\sup_{x\in E_{n,k}(r)}\v_{k,x}\Delta\xi_{n,k},
    \end{align*}
where $\v_{k,x}[\,\cdot\,]:=\v[\,\cdot\,|\, \xi_{n,k-1}=x]$. Using \eqref{ineq:squares}, we obtain, for $x \in E_{n,k}(r)$,
    \begin{align*}
        \lvert \v_{k,x} \Delta \xi_{n,k}\rvert
        &\leq \lvert \v_{k,x} \Delta \xi_{n,k} - \v \dtaylor(x)\rvert  +\v \dtaylor(x)\\
        &\leq \lvert \e_{k,x} (\Delta \xi_{n,k})^2 - \e \dtaylor^2(x)\rvert + \lvert (\e_{k,x} \Delta \xi_{n,k})^2 - (\e \dtaylor(x))^2\rvert + \varianceGauss_{n,k}(x)\\
        &\leq e_{n,k}(2,r) + e_{n,k}(1,r)( e_{n,k}(1,r) + 2\lvert \driftZeta_{n,k}(x) \rvert \Delta t_{n,k}) + \varianceZeta_{n,k}(x)\Delta t_{n,k}.
    \end{align*}
     By the local boundedness of $\driftZeta_{n,k}$ and $\varianceZeta_{n,k}$, from Lemma 1 we obtain \eqref{lemma:a_jump_conv}.
Further,            
    \begin{equation*}
        \e \sup_{t\leq 1 \wedge \tau_n^r} \lvert X_n(t) - X_n(t-) \rvert^2 
            = \e \max_{k\leq \chi_n^r}(\dxi_{n,k})^2.
    \end{equation*}        
Using Lyapunov's inequality, we obtain 
    \begin{align*}
        \e \max_{k\leq \chi_n^r}(\dxi_{n,k})^2 &\leq (\e \max_{k\leq \chi_n^r}( \dxi_{n,k})^{4})^{1/2}\\
        &\leq (\e \mbox{$\sum_{k \leq \chi_n^r}$} (\dxi_{n,k})^4 \, )^{1/2}\\ 
        &\leq \mbox{$(\sum_{k=1}^n\sup_{x\in E_{n,k}(r)}\e_{k,x}( \Delta \xi_{n,k} )^{4})^{1/2}$}.
    \end{align*}
By the triangle inequality, we have
    \begin{align*}
        \lvert \e_{k,x} (\Delta \xi_{n,k})^{4} \rvert
        &\leq \lvert \e_{k,x} (\Delta \xi_{n,k})^4 - \e \dtaylor^4(x)\rvert  + \e \dtaylor^4(x) \\
        &\leq e_{n,k}(4,r) + (\driftZeta_{n,k}(x)\Delta t_{n,k})^4 \\
        &\quad + 6(\driftZeta_{n,k}(x)\Delta t_{n,k})^2\varianceZeta_{n,k}(x)\Delta t_{n,k} + 3( \varianceZeta_{n,k}(x)\Delta t_{n,k})^2.
    \end{align*}
Using Lemmata 1 and 3 we obtain \eqref{lemma:x_jump_conv}.
\end{proof}

\begin{proof}[Proof of Theorem \ref{thm:gauss_conv}]
We verify the conditions of the EK Theorem. Denote by $L$ the generator of the bivariate process $\bm{X} := \{\bm{X}(t) = (t,X(t))\}_{t\in [0,1]}$:
    \begin{equation*}
      Lf = \partial_t f+\mu \partial_xf + \half \partial_{xx}f,\quad f\in C_c^{\infty}(\mathbb{R}^2),
    \end{equation*}
where $C_c^{\infty}(\mathbb{R}^2)$ is the space of infinitely many times differentiable functions with compact support. The distribution of $\bm{X}$ is the solution to the martingale problem for $L$, i.e., for $f \in C_c^{\infty}(\mathbb{R}^2)$,
    \begin{equation*}
        f(\bm{X}(t)) - f(\bm{X}(0)) - \int_0^t Lf(\bm{X}(s))\,ds , \quad t\in[0,1],
    \end{equation*}
is a martingale. Using condition (C3), by Proposition 3.5 in Ch.\,5 of \cite{ethier09} the martingale problem for $L$ is well-posed since the solution for the stochastic differential equation \eqref{defn:sde_X} exists and is unique. Therefore the first condition in the EK Theorem is met.

The martingale characteristics of $\bm{X}_n = (t,\Xl(t))$ are given by
    \begin{equation*}
        \bm{B}_n(t) := (t,B_n(t)),\quad \bm{A}_n(t) := \begin{pmatrix} 0 & 0\\ 0 & A_n(t)\end{pmatrix},\quad  \bm{M}_n(t) := (0,M_n(t)).
    \end{equation*}
A simple calculation shows that $\bm{M}_n$ and $\bm{M}_n^{\top}\bm{M}_n - \bm{A}_n$ are $\mathbf{F}^n$-martingales. It follows from Lemmata \ref{characteristics_converge} and \ref{tightness_SDE} that conditions (4.3)--(4.7) in the EK Theorem are satisfied, which means that all the conditions of that theorem are met.
\end{proof}

 \begin{proof}[Proof of Corollary \ref{corr:gauss_bb_conv}]
Denote the process whose trajectories are polygons with nodes $(t_{n,k}, \xi_{n,k})$ by $\Xh$. By the triangle inequality, 
    \begin{equation*}
       \norms{\Xbb-\Xl}_{\infty}  \leq  \norms{\Xbb-\Xh}_{\infty} + \norms{\Xh-\Xl}_{\infty},
    \end{equation*}
where $\Xbb$ was defined in \eqref{defn:xbb}. Using the distribution of the maximum of the standard Brownian bridge $B^{\circ}$ (see, e.g.,~p.\,63 of \cite{borodin}), for any $\varepsilon>0$ we obtain
    \begin{align*}
        \p(\norms{\Xbb-\Xh}_{\infty} \geq \varepsilon) &= \p\Bigl(\adjustlimits\max_{1\leq k \leq n} \sup_{s \in[t_{n,k-1},t_{n,k}]} \lvert \bb_{n,k}(s)\rvert \geq \varepsilon\Bigr) \\
            &\leq \sum_{k=1}^n\p\Bigl(\sup_{t\in[0,1]} \lvert \bb(t)\rvert  \geq \varepsilon/\sqrt{\Delta t_{n,k}} \Bigr) \\           
            &\leq 2 n \exp\Bigl\{-2\varepsilon^2/\max_{1\leq k \leq n}\Delta t_{n,k}\Bigr\}.
    \end{align*}
Hence $\norms{\Xbb-\Xh}_{\infty} \xrightarrow{p} 0$ since $\max_{1 \leq k \leq n}\Delta t_{n,k}\leq \maxTimeSpan/n$. Further, $\norms{\Xh-\Xl}_{\infty} = \sup_{t \in (0,1]}\lvert X_n(t) -X_n(t-)\rvert\xrightarrow{p} 0$ since $\Xl \dto X$ and $X$ is almost surely continuous. Therefore $d(\Xbb,\Xl) \leq \norms{\Xbb - \Xl}_{\infty} \xrightarrow{p} 0$. It follows from Theorem 4.1 in \cite{bill} that $\Xbb \dto X$ as $n\to\infty$.
\end{proof} 
        
\begin{proof}[Proof of Corollary \ref{thm:sde_bcp_conv}]
For sets $A \subset \Cf$ and $B \subseteq \mathbb{R}$, set
    \begin{equation*}
        A[B] := A \cap \{ x \in \Cf : x(1) \in B\}.
    \end{equation*}
Recall that, for $(g^-,g^+)\in \mathcal{G}$, one has $\inf_{t\in [0,1]}(g^+(t) - g^-(t) ) >0$. For any $\varepsilon \in (0, \varepsilon')$, $\varepsilon':=[ \inf_{t\in [0,1]}(g^+(t) - g^-(t) )/2] \wedge ( g^+(0) -x_0) \wedge   (x_0 -g^-(0)),$ and all sufficiently large $n$ (such that $\norms{g_n^{\pm} - g^{\pm}}_{\infty} < \varepsilon'$), the ``strips'' $G^{\pm \varepsilon} := S(g^-\mp \varepsilon, g^{+}\pm \varepsilon)$ are non-empty and
    \begin{equation*}
        \p(\Xbb \in G^{-\varepsilon}[B]) \leq \p(\Xbb \in G_n[B]) \leq \p(\Xbb \in G^{+\varepsilon}[B]).
    \end{equation*}
Note that $\p(X \in \partial (G^{\pm \varepsilon}) ) =0$ (the boundary is taken with respect to the uniform topology) due to the continuity of the distributions of $\sup_{0\leq t\leq 1} (X(t) - g^+(t))$ and $\inf_{0\leq t\leq 1}(X(t) - g^-(t))$ (see, e.g.,~p.\,232 in \cite{bill}). Furthermore, due to $X(1)$ having a continuous density it is clear that $\p( X(1) \in \partial B ) =0$ for any Borel set $B$ with $\partial B$ of Lebesgue measure zero. By subadditivity and the fact that $\partial (A_1\cap A_2) \subseteq \partial A_1 \cup \partial A_2 $ for arbitrary sets $A_1$ and $A_2$, it follows that
    \begin{equation*}
        \p(X \in \partial (G[B]) ) \leq \p(X \in \partial G) + \p(X(1) \in \partial B) =0.
    \end{equation*}
Hence one has from Corollary \ref{corr:gauss_bb_conv} that
    \begin{equation*}
        \liminf_{n\to \infty} \p(\Xbb \in G_n[B]) \geq \p(X \in G^{-\varepsilon}[B]), 
    \end{equation*}
and
    \begin{equation*}
         \limsup_{n\to \infty }\p(\Xbb \in G_n[B])\leq \p(X \in G^{+\varepsilon}[B]).
    \end{equation*}
As $\p(X \in \partial (G[B])) =0$ we also have
    \begin{equation*}
        \p(X \in G^{+\varepsilon}[B]) - \p(X \in G^{-\varepsilon}[B] ) = \p(X \in (\partial G)^{(\varepsilon)}[B]) \to 0\quad  \text{as } \varepsilon \downarrow 0,
    \end{equation*}
where $(\partial G)^{(\varepsilon)}$ is the $\varepsilon$-neighbourhood of $\partial G$ (also in the uniform norm). The result follows.

\end{proof}

\begin{proof}[Proof of relation \eqref{drop_C}]
Let $r := \norms{g^-}_{\infty} \vee \norms{g^+}_{\infty} $. Using the elementary inequality 
    \begin{equation*}
        \biggl\lvert 1 - \prod_{j=1}^n a_j\biggr\rvert \leq \biggl(\max_{i < n}\prod_{k=1}^i \lvert a_k \rvert\biggr) \sum_{j=1}^{n}\lvert 1 - a_j \rvert \leq \bigl(1 \vee \max_{k \leq n}\lvert a_k \rvert\bigr)^{n-1}\sum_{j=1}^n \lvert 1-a_j \rvert, 
    \end{equation*}
we have
    \begin{align*}
         &\lvert \p(\Xbb \in G_n) - \widehat{\mathbf{T}}_{n,1}\widehat{\mathbf{T}}_{n,2}\cdots \widehat{\mathbf{T}}_{n,n}\bm{1}^{\top}  \rvert\\
         &=\left| \sum_{\bm{x} \in E_n^G}\prod_{k=1}^n \frac{q_{n,k}(x_{k-1},x_k)}{C_{n,k}(x_{k-1})} -\sum_{\bm{x} \in E_n^G}\prod_{k=1}^n q_{n,k}(x_{k-1},x_k) \right|\\
         &=\left|\sum_{\bm{x} \in E_n^G}\prod_{k=1}^n \frac{q_{n,k}(x_{k-1},x_k)}{C_{n,k}(x_{k-1})} \left(1 -\prod_{i=1}^{n} C_{n,i}(x_{i-1}) \right)\right|\\
         &\leq \p(\Xbb \in G_n) \rho_n^{n-1}\sum_{i=1}^n \sup_{x \in E_{n,k}(r)}\lvert 1 - C_{n,i}(x) \rvert,
    \end{align*}
where $\rho_n := 1 \vee \max_{1\leq j\leq n}\sup_{\lvert x\rvert \leq r} C_{n,j}(x)$. Using Lemma \ref{lemma:C_sde}, we obtain \eqref{drop_C}.
\end{proof}

\section{Numerical results}

To illustrate the efficiency of our approximation scheme \eqref{eqn:bcp_gauss}, we implemented it in the programming language \texttt{Julia}. We used the package \texttt{HyperDualNumbers.jl} to evaluate the partial derivatives of $\driftX$ in \eqref{defn:b}.

It is well-known in the numerical analysis literature that trapezoidal quadrature is extremely accurate for analytic functions \cite{goodwin}. In light of relation \eqref{drop_C}, for numerical illustration purposes we drop the normalising constants $\C(x)$ and use $\widehat{\mathbf{T}}_{n,1}\widehat{\mathbf{T}}_{n,2}\cdots \widehat{\mathbf{T}}_{n,n}\bm{1}^{\top}$ instead of $\p(\Xbb \in G_n)$ to approximate $\p(X \in G)$.

\subsection{The Wiener process with one-sided boundary}

Using the method of images, the author in \cite{Daniels_1969} obtained a closed-form expression for the crossing probability of the boundary
\begin{equation*}
        g_D(t) := \half - t \log(\mbox{$\frac{1}{4}$}(1+  \sqrt{1 + 8e^{-1/t}})),\quad t>0,
    \end{equation*}
for the standard Wiener process $W:=\{W(t):t\geq 0\}$.

In order to make the state space $E_{n}^G$ finite in the case of the one-sided boundary where $g^-(t) = -\infty$, we insert an absorbing lower boundary at a low enough fixed level $L<x_0$ and replace $E_{n,k}^G$ with 
    \begin{gather*}
        E_{n,k}^{G,L} := \{x \in E_{n,k} : L < x < g_{n,k}^+ \},\quad k=1,\ldots,n,\\
        E_{n}^{G,L}:= E_{n,0}\times E_{n,1}^{G,L}\times \cdots E_{n,n}^{G,L}.
    \end{gather*}
We approximate $\p(W(t) < g_D(t),\, t\in [0,1])$ with $(\prod_{k=1}^n\mathbf{T}_{n,k}^L)\mathbf{1}^\top$, where $\mathbf{T}_{n,k}^L$ are substochastic matrices of dimensions $(\lvert E_{n,k-1}^{G,L} \rvert + 1 )\times (\lvert E_{n,k}^{G,L} \rvert  + 1)$ with entries equal to the respective transition probabilities
    \begin{align*}
        \begin{cases}
            q_{n,k}(x,y), & (x,y) \in E_{n,k-1}^{G,L} \times E_{n,k}^{G,L}, \\
            \sum_{z\in E_{n,k}\cap(-\infty,L]}q_{n,k}(x,z)& x \in E_{n,k-1}^{G,L},\, y = L,\\
            1, & x = L,\,y = L,\\
            0, & \text{otherwise},
        \end{cases}
    \end{align*}
where we put $f_n^-(t) = -\infty$ in the definition of $q_{n,k}$ in \eqref{defn:q}. This approximation assumes that the lower auxiliary boundary $L$ is sufficiently far away from the initial point $x_0$ and the upper boundary, such that after a sample path crosses the lower boundary it is highly unlikely that it will cross the upper boundary in the remaining time. In our example, we took $L=-3$. The probability of the Wiener process first hitting this level and then crossing $g_D$ prior to time $t=1$ is less than $1.26\times 10^{-6}$. Further, we chose $x_0 =0$, $\delta = 0$, $\gamma =2$ and $\minTimeSpan=\maxTimeSpan=1$. To guarantee convergence of the scheme, $\delta$ must be strictly positive, however for the values of $n$ we are interested in and the larger value of $\gamma$ compared to the one in the example from Figure \ref{figure_divergence}, the error is negligible.
\begin{figure}[ht]
    \centering
    \subfloat[The log-log plot of the absolute approximation error as a function of $n$. The crosses $\times$ show the absolute error of the Markov chain approximation {\em without} the Brownian bridge correction, while the bullets $\bullet$ show the error with the Brownian bridge correction. The upper and lower dashed lines correspond to $C_1n^{-1/2}$ and $C_2n^{-2}$ respectively, where $C_1$ and $C_2$ are fitted constants.   ]{\includegraphics{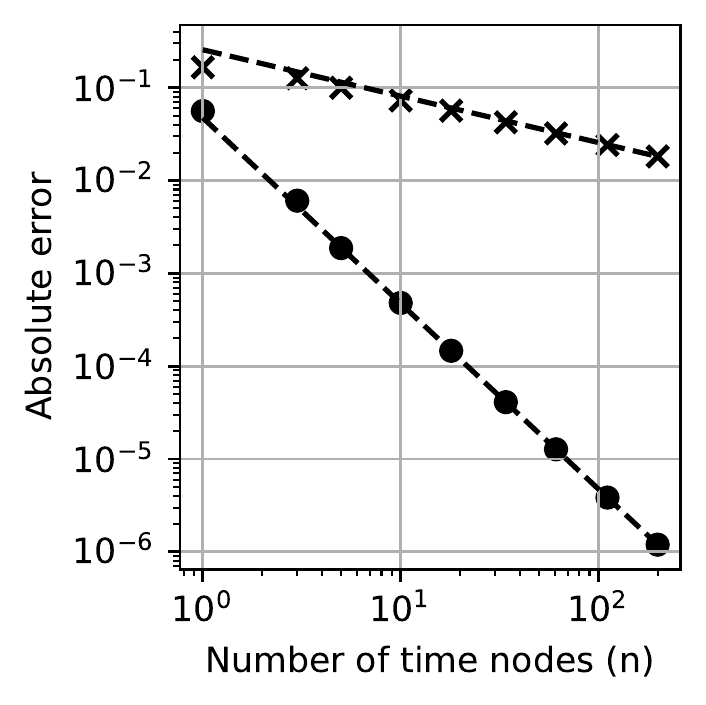} }%
    \quad
    \subfloat[Time evolution of the taboo transition density using the Markov chain approximation $\Xbb$ with $n=20$. The positions of the nodes on the surface correspond to the points from the respective $E_{n,k}^{G,-3}$. Note from \eqref{eqn:h} that the spacing between the nodes at the final time step $t =1$ is finer compared to earlier time steps. This is crucial for the observed improved convergence rate in (a).
    ]{\includegraphics{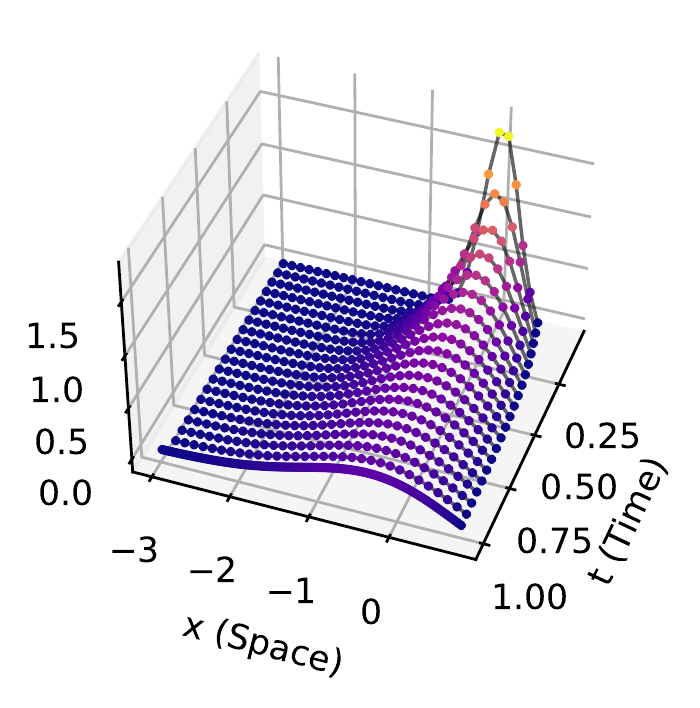}}%
    \caption{Approximation of the boundary $g_D$ non-crossing probabilities for the Wiener process. The exact Daniels boundary crossing probability in this case is $0.47974935\ldots$}%
    \label{fig:convergencePlotWiener}%
\end{figure}

From the log-log plot in Figure \ref{fig:convergencePlotWiener}, we see that the convergence rate $\lvert \p(W\in G) - \p(\Xbb \in G_n)\rvert$ is of the order $O(n^{-2})$, which is the same as the boundary approximation order of the error $\lvert \p(W \in G) - \p(W \in G_n)\rvert = O(n^{-2})$ proved in~\cite{borov} in the case of twice continuously differentiable boundaries. It appears that, due to the high accuracy of approximation of the increments' moments (Lemma \ref{lemma:moment_gauss_sde}) and the Brownian bridge correction applied to the transition probabilities, we achieve a much faster convergence rate compared to the convergence rate $O(n^{-1/2})$ achieved in \cite{fuwu} (as shown by the $\times$ crosses in Figure 1\,(a)).

\subsection{The Ornstein--Uhlenbeck process}

Let $X$ be the Ornstein--Uhlenbeck (OU) process satisfying the following stochastic differential equation:
    \begin{equation*}
        \begin{cases}
            dX(t) = -X(t)\, dt +  dW(t),\quad  t \in (0,1],\\
            X(0) = 0.
        \end{cases} 
    \end{equation*}
The usual approach for computing the boundary crossing probability of the OU process is to express the process in terms of a time-changed Brownian motion. This is achieved by using the time substitution $\theta(t) := (e^{2 t}-1)/2$, so that we can write $X(t) = e^{-t}W(\theta(t))$. 

To illustrate the effectiveness of our approximation, we consider the following two-sided boundary for which explicit boundary crossing probabilities are available for the OU process:
    \begin{align*}
    g_{\psi}^{\pm}(t) &:= e^{-t}\psi_{\pm}(\theta(t)) ,\, \mbox{where } \psi_{\pm}(t) = \pm \half t \cosh^{-1}(e^{4/t}),\quad t>0.
    \end{align*}
Letting $t := \theta(s)$, we obtain
    \begin{align*}
        P_{\psi}(T) &:= \p(\psi_{-}(t) < W(t) < \psi_{+}(t), \,0 \leq t \leq T)\\ 
        &= \p(e^{-s}\psi_{-}(\theta(s)) < e^{- s}W(\theta(s)) < e^{- s}\psi_{\pm}(\theta(s)),\, 0 \leq s \leq \theta^{-1}(T))\\
        &= \p(g_{\psi}^{-}(s) < X(s) < g_{\psi}^{+}(s),\, s \in [0,\theta^{-1}(T)]),
    \end{align*}
where we set $T := \theta(1)$ so that $s\in[0,1]$. A closed-form expression for $P_{\psi}(T)$ can be found on p.\,28 in \cite{lerche}. The exact boundary crossing probability in this case is $0.75050288\ldots.$

Using \eqref{defn:b} and \eqref{defn:sigma}, the approximate drift and diffusion coefficients of the weak second-order It\^o--Taylor expansion for the OU process are given by
    \begin{equation*}
        \driftZeta_{n,k}(x) = - x + \half\Delta t_{n,k} x , \quad \stddevZeta_{n,k}(x) = 1 - \half\Delta t_{n,k}.
    \end{equation*}
From the numerical results below, it appears that it is sufficient to use the weak second-order It\^o--Taylor expansion of transition densities instead of the true transition density of the OU process for our Markov chain approximation to maintain a $O(n^{-2})$ convergence rate of the boundary crossing probabilities.      
\begin{figure}[ht]
    \centering
    \subfloat[The log-log plot of the absolute approximation error as a function of $n$. The upper and lower dashed lines correspond to $C_1n^{-1/2}$, $C_2n^{-1}$ and $C_3n^{-2}$ respectively, where $C_1$, $C_2$ and $C_3$ are fitted constants. The crosses $\times$ show the absolute error of the Markov chain approximation {\em without} the Brownian bridge correction, while the bullets $\bullet$ show the error with the Brownian bridge correction. The markers $\bigtriangleup$ and $\circ$ show the absolute error of the Markov chain approximation using the exact transition density of the OU process and the Euler--Maruyama approximation instead of the transition density from the It\^o--Taylor expansion, respectively.]{\includegraphics{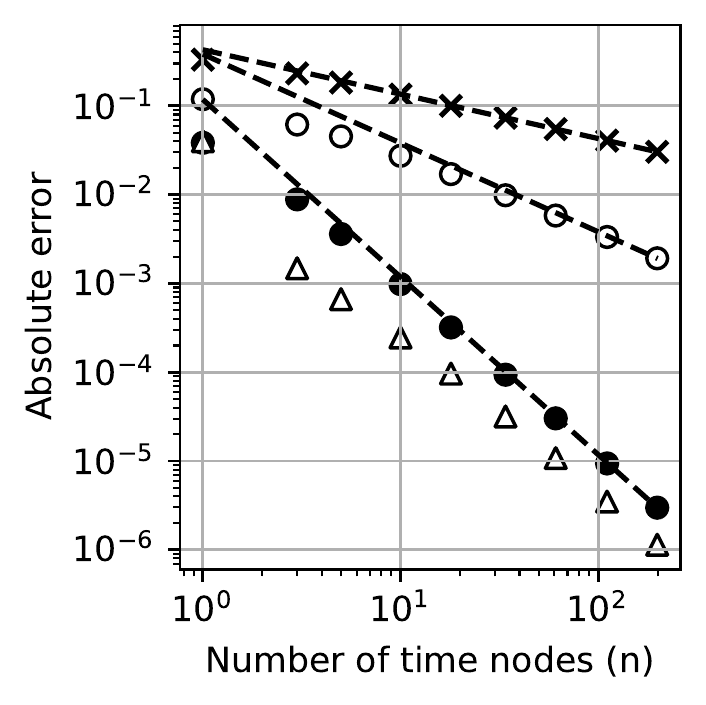} }%
    \quad
    \subfloat[Time evolution of the taboo transition density using the Markov chain approximation $\Xbb$ with $n=20$. The positions of the nodes on the surface correspond to the points from the respective $E_{n,k}^{G}$. Note from \eqref{eqn:h} that the spacing between the nodes at the final time step $t =1$ is finer compared to earlier time steps. This is crucial for the observed improved convergence rate in (a).
    ]{\includegraphics{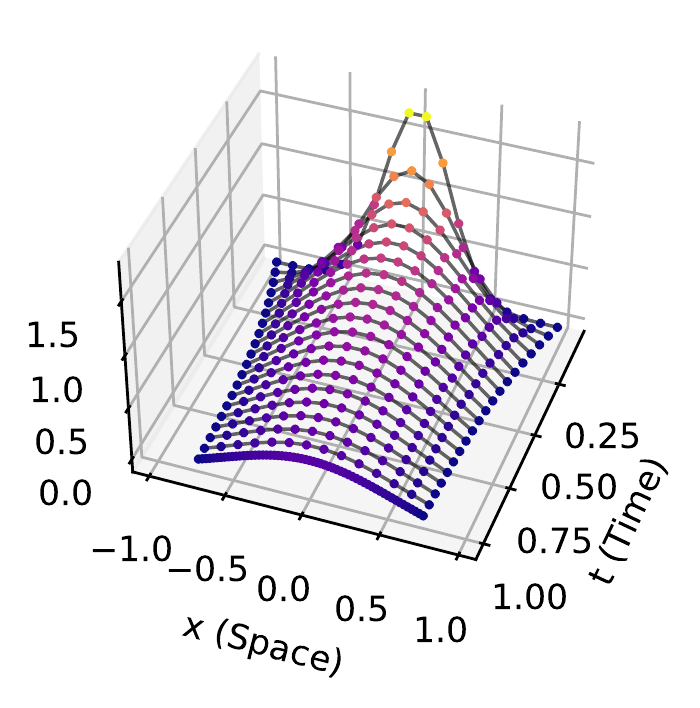}}%
    \caption{Approximation of non-crossing probabilities of the OU process with boundaries $g_{\psi}^{\pm}$.}%
    \label{fig:convergencePlotOU}%
\end{figure}
Using the log-log plot in Figure \ref{fig:convergencePlotOU}, we empirically observe that the convergence rate $\lvert \p(X\in G) - \p(\Xbb \in G_n)\rvert$ is of the order $O(n^{-2})$, which is the same as the boundary approximation order of error $\lvert \p(X \in G) - \p(X \in G_n)\rvert = O(n^{-2})$ proved in \cite{Downes2008} in the case of twice continuously differentiable boundaries. It appears that, at this level of accuracy, one might ignore the higher order terms in the diffusion bridge crossing probability derived in~\cite{baldi02}.

\end{document}